\documentclass[12pt]{article}
\usepackage{graphicx,lpic}

\usepackage{mathrsfs}
\usepackage{amsmath, amscd, amsthm,amssymb, amsfonts, verbatim,subfigure, slashed,stmaryrd}
\usepackage[mathcal]{eucal}
\usepackage{braket}     % brakets and sets

\usepackage[colorlinks=true,linkcolor=red,citecolor=blue]{hyperref}

\usepackage[latin1]{inputenc}

%Use Palatino and Euler fonts
%\usepackage{mathpple}

%Other Fonts
%\usepackage{pxfonts}
%\usepackage{mathpazo}
%\usepackage{eulervm}
%\usepackage{fourier}
%\usepackage{kmath,kerkis}
%\usepackage{mathptmx}

%\input mfpic.tex
%\usemetapost

\usepackage{epsfig,color}

\usepackage{pb-diagram}

 \textwidth=6in
 \textheight=8.7in
 \voffset=-1.2cm
 \oddsidemargin=15pt
 \evensidemargin=15pt

\renewcommand{\epsilon}{\varepsilon}

\newcommand{\C}{\mathbb C}
\newcommand{\N}{\mathbb N}

\newcommand{\op}{\operatorname}
\newcommand{\mbf}{\mathbf}
\newcommand{\mbb}{\mathbb}
\newcommand{\mc}{\mathcal}

\newcommand{\abs}[1]{\left| #1 \right|}

\newcommand{\R}{\mbb R}

\newcommand{\SC}[1]{\left\llbracket #1\right\rrbracket}

\newcommand{\bds}{\boldsymbol}

%\newtheoremstyle{thm}% name
%  {7pt}%      Space above
%  {7pt}%      Space below
%  {\itshape}%         Body font
%  {}%         Indent amount (empty = no indent, \parindent = para indent)
%  {\bf}% Thm head font
%  {.}%        Punctuation after thm head
%  {5pt}%     Space after thm head: " " = normal interword space;
%         %       \newline = linebreak
%  {\thmnumber{#2 }\thmname{#1}\thmnote{ (#3)}}%         Thm head spec (can be left empty, meaning `normal')

\theoremstyle{thm}

\newtheorem{theorem}{Theorem}[section]
\newtheorem{thm-def}{Theorem/Definition}[theorem]
\newtheorem{proposition}[theorem]{Proposition}

\newtheorem{lemma}[theorem]{Lemma}

\theoremstyle{definition}
\newtheorem{example}[theorem]{Example}
\newtheorem{remark}{Remark}[section]

\newtheorem*{examples}{Examples}

\numberwithin{equation}{subsection}

\theoremstyle{definition}
\newtheorem{definition}[theorem]{Definition}

\theoremstyle{rem}

\usepackage{tikz}

\title{Oriented cobordism of random manifolds}

\author{Miguel Berm\'udez\thanks{\texttt{bermudez@math.univ-paris-diderot.fr}}\medskip
\\{\small Universit\'e Paris Diderot}
\\{\footnotesize B\^atiment Sophie Germain},
{\footnotesize Case 7012},
{\footnotesize 75205 Paris Cedex 13}
}
\date{}

\begin{document}
\maketitle

\begin{abstract}
We introduce a general framework allowing the systematic study of random manifolds. In order to do so, we will put ourselves in a more general context than usual by allowing the underlying probability space to be non commutative in the sens of Connes \cite{connes80surveyfoliations}. We introduce in this paper the oriented cobordism groups of random manifolds, which we compute in dimensions $0$ and $1$, and we prove the surjectivity of the corresponding Thom-Pontryagin homomorphism. Non commutative and usual (commutative) random manifolds are naturally related in this setting since two commutative random manifolds can be cobordant through a non commutative one, even if they are not cobordant as usual random manifolds. The main interest of our approach is that expected characteristic numbers can be generalized to the non commutative case and remain naturally invariant up to cobordism. This includes Hirzebruch signature, Pontryagin numbers and, more generally, the expected index of random elliptic differential operators. 
\end{abstract}

\section{Introduction}
A random manifold is a standard probability space $\mbf K$ whose elements are connected manifolds. We define the total space of a random manifold $\mbf K$ as the disjoint union $\mbf X=\coprod_{M\in \mbf K}M$. We shall see $\mbf X$ as a laminated set whose leaves are the elements of $\mbf K$, and assume that there is a standard measurable structure on $\mbf X$ compatible with the usual Borel structure on the leaves and on $\mbf K$. 

%In most these cases the underlying random manifold $\mbf K$ is not a standard probability space and one prefers then to replace the measurable space $\mbf K$ by a discrete standard subset $T\subset \mbf X$ intersecting every leaf of $\mbf X$, and we shall assume that the equivalence relation on $T$ induced by the leaves of $\mbf X$ is standard in the sens of \cite{Moore and alt} and that there is a probability measure on $T$ which is invariant in the sens of \cite{Gab,Gab}. In other words, we can see $\mbf K$ as a non commutative probability space in the sens of Connes \cite{Con}. In general, that case, we shall call $\mbf X$ a singular or non commutative random manifold.

We call {\em observable} every map $b:\mbf K\to\R$ that associates to every manifold in $\mbf K$ a real number. For instance: the dimension, the Betti numbers, the Euler characteristic, the Hirzebruch signature and, more generally, the index of differential and pseudo-differential operators, are examples of such invariants. In the setting of random geometry, the most natural question is the following: what is the expected value of $b$ in $\mbf K$? Of course, in order to get a well posed question, we must guarantee that the map $b$ is measurable and, more precisely, integrable. For classical invariants as those mentioned above, it deeply depends on the measurable structure and specific probability measure we put on the set $\mbf K$. Our main goal is to develop a general framework allowing the systematic study of such invariants. 

Our first step will be to enlarge the notion of random manifold by allowing the probability space $\mbf K$ to be non commutative in the sense of Connes \cite{connes90book}. A non commutative probability space is defined by Connes as (the Morita equivalence class of) a separable von Neumann algebra $\mc N(\mbf K)$ endowed with a weakly continuous tracial state $\tau$ \cite{connes80surveyfoliations}. Invariants can be viewed as elements of $\mc N(\mbf K)$ and their expected value is given by the trace. If $\mbf K$ is a classical probability space then $\mc N(\mbf K)=L^{\infty}(\mbf K)$, i.e. a commutative von Neumann algebra, and the trace is the usual Lebesgue integral. 

Random manifolds arise naturally in quantum physics. Up to a Wick rotation, which formaly replaces the real time coordinate $t$ by a pure imaginary time $\tau=it$, the space-time becomes riemannian and quantum mechanics reduce to random geometry. For instance, a Wick rotated relativistic quantum particle is given by a $4$-dimensional manifold $M$ (representing the space-time) together with a set $\mbf K$ of continuous paths $\alpha:[0,1]\to M$. The probability measure on $\mbf K$ is the Wiener limit of random walks associated to random jumps of probability density
$$
p(x|y)\propto e^{-d(x,y)}d\op{vol}_M(x),
$$
where $d(x,y)$ is the metric distance and $d\op{vol}_M$ the metric volume on $M$. Another appealing example is string theory, which is a $2$-dimensional version of the relativistic particle above. It is obtained, roughly speaking, by replacing random paths by random immersions $\alpha:\Sigma\to M$ with $\Sigma$ a surface with boundary, random walks by PL immersions, and the metric length of the path by the metric area of the surface. These two examples are particular cases of a more general set of physical theories called brane theories. In general, brane theories can be viewed as bordisms in the category of random manifolds. See \cite{polchinski1998string} for a complete introduction to string theory. Another interesting example is pure quantum gravity, which is given by a random riemannian bordism of a fixed compact riemannian $3$-manifold $M$, whose expected value is an Einstein manifold. Since string theory is given by a random $2$-dimensional submanifold of a fixed manifold, the construction can be made rigorous by using the classification of compact surfaces and PL immersions. In turn, quantum gravity deals with the set of all $4$-dimensional riemannian manifolds whose boundary is $M$, and there is no natural way to put a probability measure on this set, mostly because we don't know what it looks like. The usual approach is to fix the underlying topology and to randomize the metric. Since a riemannian metric can be viewed as a field of $4\times 4$ non singular positive matrices, this reduces quantum gravity to a most familiar quantum field theory, but the theory still has far too many symmetries to be renormalizable. However, quantum gravity remains an extremely speculative domain, and no satisfactory rigorous mathematical model has been found so far. One of the main goals of our introduction to the general study of random manifolds is to clarify and unify the existing approaches to random geometry.

A very fruitful setting is given by algebraic random $n$-manifolds, which can be defined as $p^{-1}(0)$ where $p:\C^{n+m}\to \C^p$ is a random polynomial. In that case the probability space $\mbf K$ is a finite vector space of polynomials and can therefore be endowed with a family of natural probability measures. In a recent work Gayet and Welschinger make very stimulating progresses on the computation of expected Betti numbers in this setting \cite{gayetwelschinger2016betti}.

Given an oriented random manifold with boundary $\mbf K$, choosing a random connected component of the boundary of an element of $\mbf K$ gives rise to a random manifold, provided that we can measurably associate to every element $M\in \mbf K$ a probability measure on the set of connected components of $\partial M$. The corresponding random manifold will be called the boundary of $\mbf K$. Two random manifolds $\mbf K$ and $\mbf K'$ are said to be {\em cobordant} if their disjoint union $\mbf K+\mbf K'$ is isomorphic to the boundary of a random manifold. Cobordism obviously defines an equivalence relation on the set of random manifolds and one can easily see that the disjoint union induces a structure of abelian group on the set of cobordism classes. The aim of this paper is to introduce the study of these groups. 
 
\section{Random manifolds}
A {\em BT-space} is given by a standard Borel space $\mbf X$ endowed with a topology whose connected components, endowed with its Borel $\sigma$-algebra are measurable subspaces of $\mbf X$. A map $\mbf X\to \mbf Y$ is said to a \textit{BT-map} if it is a simultaneously measurable and continuous map that sends transversals of $\mbf X$ into transversals of $\mbf Y$. The class of BT-spaces together with BT-maps is a category called \textbf{BTop}. We call \textit{leaves} of a BT-space the connected components of the underlying topology and \textit{transversals} discrete Borel subsets. Two transversals $S$ and $T$ are said to be isomorphic if there exists a leaf-preserving Borel isomorphism $\gamma:T\to S$ that we call a \textit{holonomy transformation}.

\begin{examples}
\begin{enumerate}
  \item A connected Polish space $V$, endowed with is Borel $\sigma$-algebra, is an example of BT-space of a single leaf and transversals, which are simply discrete subsets of $V$, are always countable. 
  \item Another example is given by a standard Borel space $\mbf K$ endowed with the discrete topology. In that case every Borel subset of $\mbf K$ is a transversal. 
  \item Given two BT-spaces $\mbf X$ and $\mbf Y$, we shall note $\mbf X\otimes \mbf Y$ the cartesian product of $\mbf X$ and $\mbf Y$ endowed with the product topology and the product measurable structure. 
  \item The BT-space $V\otimes\mbf K$, obtained as a product of a Polish space $V$ and a standard Borel space $\mbf K$ will be called a {\em prism} of \textit{base} $V$ and \textit{vertical} $\mbf K$.
\end{enumerate}	
\end{examples}

\begin{definition}[\cite{bermudez2006char}]
	Let $\mbf X$ a BT-space. An {\em transverse invariant measure} on $\mbf X$ is a map that associates to every transversal $T$ of $\mbf X$ a $\sigma$-finite Borel measure $\mu_T$ on $T$ such that $\gamma^*\mu_T=\mu_S$ for every holonomy transformation $\gamma:S\to T$. The pair $(\mbf X,\mu)$ will be called a {\em random topological space}. A random topological space is said to be a \textit{random manifold (with boundary)} if all its leaves are separable topological manifolds (with boundary). Let $(\mbf X,\mu)$ and $(\mbf Y,\nu)$ two random topological spaces. A BT-map $f:\mbf X\to \mbf Y$ is said to {\em measure preserving} if for every transversal $T$ of $\mbf X$ we have
	$$
	\mu(T)=\int_{f(T)}\abs{f^{-1}(t)}d\nu(t)
	$$
	where $\abs{\cdot}$ denotes the cardinal of a set. The category of random topological spaces together with measure preserving BT-maps will be called \textbf{RTop}.
\end{definition}

\begin{examples}
\begin{enumerate}
  \item If $V$ is a Polish space, two discrete subsets of $V$ are isomorphic as transversals if and only if they have the same cardinality; hence the counting measure gives rise to an invariant transverse measure. 
  \item Another example is given by a standard measure space $\mbf K$ endowed with the discrete topology. In that case every Borel subset of $\mbf K$ is a transversal and the only holonomy trasformation is the identity. Hence, every Borel measure on $\mbf K$ induces a transverse measure which is automatically invariant. 
  \item Given two random topological spaces $(\mbf X,\mu)$ and $(\mbf Y,\nu)$, the product of a transversal of $\mbf X$ and a transversal of $\mbf Y$ is a transversal of $\mbf X\otimes\mbf Y$ and one can easily verify that the corresponding product measure $\mu\otimes\nu$ is invariant. 
\end{enumerate}

\end{examples}
 Let $(\mbf X,\mu)$ be a random manifold (with boundary). We call \textit{atlas} of $\mbf X$ a countable set of isomorphisms of BT-spaces
 $$
 \mathfrak A=\{\varphi_{i}:U_{i}\to V_{i}\otimes \mbf K_i\}_{i\in I}
 $$
 called \textit{local charts}, where $\{U_i\}_{i\in I}$ is an open Borel covering of $\mbf X$, $V_i$ is a open subset of the euclidean space $\R^n$ (resp. the half-space $\mathbb H^n$) and $\mbf K_i$ is a standard Borel space for every $i\in I$. Notice that the change of charts $\varphi_{i}\circ\varphi_{j}^{-1}$ has automatically the form
\begin{equation*}
\varphi_{i}\circ\varphi_{j}^{-1}(x,t)=(f_{t}(x),\gamma(t))
\end{equation*}
where $\gamma$ is a partial Borel isomorphism $\mbf K_j\to \mbf K_i$ and $f_{t}$ is a local homeomorphism of $\R^{n}$ (resp. $\mathbb H^n$). An atlas is said to be {\em oriented} if the local homeomorphisms $f_t$ preserve the orientation. Notice that the invariant measure $\mu$ induce a measure on each $\mbf K_i$ preserved by change of verticals $\gamma$. We define the {\em cost} of $\mathfrak A$ as 
$$
c(\mathfrak A)=\sum_{i\in I}\mu(\mbf K_i)
$$

Let $V$ be an open subset of $\R^n$ and $\mbf X=V\otimes \mbf K$ a trivial random manifold of base $V$. We shall note $\mc A(V\otimes \mbf K)=C^{\infty}(V,L^{\infty}(\mbf K))$, the commutative algebra of smooth maps from $V$ to the Banach algebra $L^{\infty}(\mbf K)$ of essentially bounded functions on $\mbf K$. Every element $a\in \mc A(V\otimes\mbf K)$ can be viewed as a measurable continuous fonction $a:V\times\mbf K\to \R$, two such functions representing the same element of $\mc A(V\otimes\mbf K)$ if they coincide on almost every plaque $V\times\{\bullet\}$. For every measurable subset $B\subset V\times \mbf K$, we denote by $\mc A(B)$ the algebra formed by the restrictions to $B$ of the elements of $\mc A(V\otimes \mbf K)$. A map $f:B\to B'$ between Borel subsets of trivial random manifolds is said to be {\em smooth} if for every $a\in \mc A(B')$ the fonction $a\circ f$ belongs to $\mc A(B)$. In that case the map $a\mapsto \mc A(f)(a):=a\circ f$ gives rise to a morphism of algebras $\mc A(f):\mc A(B')\to \mc A(B)$.  

Let $\mbf X$ be a random manifold. An atlas $\{\varphi_i:U_i\to V_i\otimes \mbf K_i\}_{i\in I}$ of $\mbf X$ is said to be \textit{smooth} if for every $i,j\in I$ the corresponding change of charts $\varphi_i\circ\varphi_j^{-1}:\varphi_j(U_i\cap U_j)\to \varphi_i(U_i\cap U_j)$ is smooth. Two smooth atlas are called compatible if their union is a smooth atlas. The equivalence class of compatible bounded atlas is called a \textit{smooth  structure} on $\mbf X$. A random manifold together with a smooth structure is called a \textit{smooth random manifold}. A smooth atlas $\{\varphi_i:U_i\to V_i\times \mbf K_i\}_{i\in I}$ is said to be {\em compact} if for every $i\in I$ there exists a compact subset $F_i\subset V_i$ such that the sets $\varphi_i^{-1}(F_i\times \mbf K_i)$ cover $\mbf X$.

\begin{definition}
	A random manifold will be called \textit{compact} if it has a compact smooth atlas of finite cost.
\end{definition}

\subsection{The tangent space}
Let $\mbf X$ be a smooth random manifold. We shall note
$$
\mc A(\mbf X)=\{a:\mbf X\to \R|\forall i\in I,\; a\circ \varphi_i^{-1}\in \mc A(V_i\times \mbf K_i)\}
$$

A tangent vector field of $\mbf X$ is by definition a derivation of the algebra $\mc A(\mbf X)$. We shall note $\tau(\mbf X)=\op{Der}\mc A(\mbf X)$ the vector space of tangent vector fields and we will call it the \textit{tangent space} of $\mbf X$. Since $\mc A(\mbf X)$ is a commutative algebra, the tangent space $\tau(\mbf X)$ has a natural structure of $\mc A(\mbf X)$-module.

\begin{lemma}\label{lem:trivial-free}
	If $\mbf X=V\otimes\mbf K$ is a trivial random $n$-manifold, then $\tau(\mbf X)\simeq\mc A(\mbf X)^n$.
\end{lemma}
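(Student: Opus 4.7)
The plan is to recover the classical identification $\op{Der}(C^{\infty}(V)) \iso \Gamma(TV)$ in the $L^{\infty}(\mbf K)$-valued setting. Writing $x_1,\ldots,x_n$ for the restrictions of the ambient coordinates on $\R^n$, I would first exhibit the obvious $\mc A(\mbf X)$-linear map
\[
\Phi \colon \mc A(\mbf X)^n \longrightarrow \tau(\mbf X), \qquad (f_1,\ldots,f_n) \longmapsto \sum_{i=1}^n f_i \partial_i,
\]
where $\partial_i$ denotes partial differentiation in $x_i$ on $\mc A(\mbf X) = C^{\infty}(V, L^{\infty}(\mbf K))$. That each $\partial_i$ is a derivation is a direct application of the Leibniz rule in the Banach-valued $C^{\infty}$ setting, and the $\mc A(\mbf X)$-linearity of $\Phi$ is formal. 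Injectivity is immediate: evaluating $\sum_i f_i \partial_i$ on the (smooth, constant-in-$\mbf K$) element $x_j \in \mc A(\mbf X)$ returns exactly $f_j$.

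Surjectivity is the substantive part. Given $D \in \tau(\mbf X)$, set $f_i \defeq D(x_i) \in \mc A(\mbf X)$; I would then prove $D = \sum_i f_i \partial_i$ by checking the identity pointwise in $V$, in three steps. First, $V$-locality: if $g \in \mc A(\mbf X)$ vanishes on a neighborhood $U$ of some $p \in V$, pick $\chi \in C^\infty(V)$ with $\chi(p) = 0$ and $\chi g = g$; then $D(g) = \chi D(g) + g D(\chi)$, and evaluating at $p$ yields $D(g)(p) = 0$ in $L^{\infty}(\mbf K)$. Second, on a convex neighborhood of $p$ (which by locality is all one needs) the Hadamard identity
\[
g(x,k) = g(p,k) + \sum_i (x_i - p_i) \int_0^1 \partial_i g\bigl(p + t(x-p),k\bigr)\,dt
\]
holds in $L^{\infty}(\mbf K)$. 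Multiplying each summand by a bump $\psi \equiv 1$ near $p$ (which leaves values at $p$ intact but produces bona fide global elements of $\mc A(\mbf X)$), applying $D$ and evaluating at $p$ yields
\[
D(g)(p) = D(g(p))(p) + \sum_i f_i(p)\, \partial_i g(p),
\]
so the identification reduces to the claim that $D(c) = 0$ for every $c \in L^{\infty}(\mbf K)$ regarded as a constant-in-$V$ element of $\mc A(\mbf X)$.

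This last vanishing is the main obstacle. For an idempotent $e = \chi_S$ with $S \subset \mbf K$ measurable, the identity $D(e) = D(e^2) = 2 e D(e)$ together with $(1-2e)^2 = 1$ forces $D(e) = 0$, so $D$ annihilates the $\R$-span of characteristic functions, i.e.\ all simple functions on $\mbf K$. To pass to arbitrary $c \in L^\infty(\mbf K)$, I would invoke automatic continuity: for each $p \in V$, the composite $\op{ev}_p \circ D|_{L^\infty(\mbf K)}$ is a derivation of the commutative von Neumann algebra $L^\infty(\mbf K)$ into itself, hence norm-continuous by Sakai and (being inner in a commutative algebra) zero; applied to a sequence of simple functions converging to $c$ this gives $D(c)(p) = 0$ for every $p$, i.e.\ $D(c) = 0$. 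Combined with the Hadamard step, $D(g)(p) = \sum_i f_i(p)\, \partial_i g(p)$ at every $p$, so $D = \Phi(f_1,\ldots,f_n)$ and $\Phi$ is an isomorphism. If one prefers to avoid appealing to automatic continuity, an equivalent route is to build it into the definition of $\op{Der}$ by demanding $L^\infty(\mbf K)$-linearity, after which the Hadamard step alone concludes.
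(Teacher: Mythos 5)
Your proof follows the same overall route as the paper's --- expand $g$ around a point $p\in V$ via a Taylor/Hadamard identity, apply the derivation, evaluate at $p$, and read off $D=\sum_i D(x_i)\partial_i$ --- but it is more complete on two points that the paper's proof passes over in silence. The minor one is $V$-locality and the globalization by bump functions, which the paper simply omits (it applies $\xi$ to an identity valid only on a star-convex neighborhood $V_y$). The substantive one is the term $D(g(p))(p)$: when one applies the derivation to the expansion $g(x)=g(p)+\sum_i(x_i-p_i)(\cdots)$, the zeroth-order term $g(p)$ is not a real scalar but an element of $L^\infty(\mbf K)$ viewed as a function constant in $V$, and nothing in the definition of $\op{Der}\mc A(\mbf X)$ forces a derivation to annihilate it. The paper's displayed conclusion $\xi(a)(y)=\sum_i\xi(x_i)(y)\partial_i a(y)$ tacitly assumes $\xi(a(y))(y)=0$; you identify this as "the main obstacle" and close it correctly: the idempotent computation $D(e)=2eD(e)$ with $(1-2e)^2=1$ kills characteristic functions, hence all simple functions, and automatic continuity (Ringrose/Sakai) plus the Singer--Wermer vanishing of derivations of a commutative C*-algebra extends this to all of $L^\infty(\mbf K)$, using that simple functions are uniformly dense over a finite measure space. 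This is exactly the right fix, and your closing remark --- that one could instead build $L^\infty(\mbf K)$-linearity into the definition of $\tau(\mbf X)$ --- is the alternative the paper presumably intends. The only cosmetic difference otherwise is that you use the first-order Hadamard form of the remainder where the paper uses the second-order integral form; both make the coefficient functions manifestly elements of $\mc A(\mbf X)$, so nothing hinges on that choice.
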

\begin{proof}
	Let $\mbf X=V\otimes \mbf K$. Recall that every function $a\in \mc A(\mbf X)$ is  a map $a:V\to L^{\infty}(\mbf K)$ with continuous partial derivatives of every order. Hence the $n$ partial derivatives of order $1$ are derivations of $\mc A(\mbf X)$. We shall prove that every derivation $\xi$ of $\mc A(\mbf X)$ can be written in an unique way as $\xi=\sum_i a_i\partial_i$ with $a_i\in \mc A(\mbf X)$.  It follows from the Taylor's theorem with integral remainder (which is valid for smooth functions of $n$ variables taking values in any Banach space) that for every $y\in V$ there exists a star convex open subset $V_y\subset V$ centered in $y$ such that for $x\in V_y$ we have  
		$$
		a(x)=a(y)+\sum_i (x_i-y_i)\partial_i a(y)+\sum_{i,j}(x_i-y_i)(x_j-y_j)B^{ij}_{y}(x)
		$$
		where 
	where $x_i$ and $y_i$ denote the coordinates of $x$ and $y$ and $B^{ij}_{y}$ is a function in $\mc A(\mbf X)$ defined by the formula
	$$
	B^{ij}_{y}(x)=\frac 12\int_0^1(1-s)^2\partial_{ij}a(y+s(x-y))ds
	$$
	
	 Fixing $y$, applying $\xi$ to the identity above and then evaluating in $y$ we get identity
	$$
	\xi(a)(y)=\sum_i \xi(x_i)(y)\partial_ia(y)
	$$
	valid for every $a\in \mc A(\mbf X)$ and every $y\in V$. Hence $\xi=\sum_i \xi(x_i)\partial_i$, as wanted.
\end{proof}

\begin{lemma}
	Let $\mbf X$ be a random manifold. Every compact atlas of $\mbf X$
	$$
	\{\varphi_i:U_i\to V_i\otimes \mbf K_i\}_{i\in I}
	$$
	 admits a subordinated partition of unity, i.e. for every $i\in I$ there exists a positive element $a_i\in \mc A(\mbf X)$ with support in $U_i$ such that $\sum_{i\in I}a_i=1$.
\end{lemma}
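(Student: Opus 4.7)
The plan is to imitate the classical partition-of-unity construction: pull back smooth Euclidean bump functions via the charts of the atlas and normalize by their sum. For each $i\in I$, using a standard smooth bump function on $\R^n$, I would choose $\rho_i\in C^\infty(V_i,[0,1])$ with $\rho_i\equiv 1$ on the compact core $F_i$ and $\op{supp}(\rho_i)$ compact in $V_i$. I would then set $b_i\colon\mbf X\to\R$ to equal $\rho_i(\pi_i(\varphi_i(x)))$ on $U_i$ and $0$ elsewhere, where $\pi_i\colon V_i\otimes\mbf K_i\to V_i$ is the first projection.

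The first thing to check is that $b_i\in\mc A(\mbf X)$ with $\op{supp}(b_i)\subset U_i$. The key observation is that $\varphi_i^{-1}(\op{supp}(\rho_i)\times\mbf K_i)$ is closed in $\mbf X$, because its complement is the union of the two open sets $\mbf X\setminus U_i$ and $\varphi_i^{-1}((V_i\setminus\op{supp}(\rho_i))\times\mbf K_i)$. Consequently, in any other chart $\varphi_j$ the pullback $b_i\circ\varphi_j^{-1}$ is smooth on the open set $\varphi_j(U_i\cap U_j)$ by smoothness of the change of charts and vanishes on an open neighborhood of the complementary closed set, so that it extends by zero to an element of $\mc A(V_j\otimes\mbf K_j)$.

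The main obstacle is local finiteness, which is required to turn $S:=\sum_i b_i$ into a well-defined smooth function. To overcome it, before choosing the bumps I would first refine the compact atlas to a locally finite one. The atlas is countable by definition, every leaf of $\mbf X$ is a separable, hence paracompact, manifold, and the compactness condition provides relatively compact open sets $W_i$ with $F_i\subset W_i\subset\br W_i\subset V_i$; a leafwise shrinking argument combined with the countability of $I$ then produces a locally finite refinement whose cores still cover $\mbf X$. Once local finiteness is in hand, $S$ lies in $\mc A(\mbf X)$ and satisfies $S\ge 1$ everywhere because the cores cover $\mbf X$ and $\rho_i\equiv 1$ on $F_i$. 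The functions $a_i:=b_i/S$ are then smooth, nonnegative, supported in $U_i$ and sum to $1$; summing the pieces over the fibers of the refinement map yields a partition subordinated to the original cover $\{U_i\}_{i\in I}$.
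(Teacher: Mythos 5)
Your construction is essentially the paper's own proof: pull back a bump function $g_i$ on $V_i$ equal to $1$ on the compact core $F_i$ via the projection $U_i\to V_i$, extend by zero to get $h_i$ supported in $U_i$, and normalize by $\sum_i h_i$, which is bounded below by $1$ because the cores cover $\mbf X$. The extra care you take with the closedness of the support, the verification that $b_i\in\mc A(\mbf X)$ in every other chart, and the local finiteness of the sum for a countably infinite atlas only strengthens an argument the paper states in three lines.
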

\begin{proof}
	Let us consider a family of compact sets $F_i\subset V_i$ verifying the definition of compact atlas, and let $g_i$ a bump function on $V_i$ such that $F_i$ is contained in the interior of its support. If we define $h_i:U_i\to \R$ as the pull-back of $g_i$ by the projections $U_i\to V_i$, we obtain the wanted partition of unity by setting $a_i=h_i/\sum_i h_i$. 
\end{proof}

\begin{lemma}\label{lem:riemann-metric}
	If $\mbf X$ is a compact random manifold, it has a riemannian metric, i.e. a morphism of modules
	$$
	g:\tau(\mbf X)\otimes\tau(\mbf X)\to \mc A(\mbf X)
	$$
	such that for every $\xi,\nu\in \tau(\mbf X)$:
	\begin{enumerate}
  \item $g(\xi,\nu)=g(\nu,\xi)$;
  \item $g(\xi,\xi)\geq 0$;
  \item $g(\xi,\xi)= 0$ iff $\xi=0$.
\end{enumerate}

\end{lemma}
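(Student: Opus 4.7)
My plan is to imitate the classical construction of a Riemannian metric by gluing Euclidean metrics with a partition of unity, adapted to the measurable setting. Let $\{\varphi_i:U_i\to V_i\otimes \mbf K_i\}_{i\in I}$ be a compact smooth atlas of $\mbf X$ and let $\{a_i\}_{i\in I}$ be a subordinate partition of unity, which exists by the previous lemma. By Lemma \ref{lem:trivial-free}, each restricted tangent module $\tau(U_i)$ is free of rank $n$ over $\mc A(U_i)$, with canonical basis $\partial_1^{(i)},\dots,\partial_n^{(i)}$ coming from the coordinates on $V_i$. This yields a canonical Euclidean metric $g_i:\tau(U_i)\otimes\tau(U_i)\to \mc A(U_i)$ defined on the basis by $g_i(\partial_k^{(i)},\partial_l^{(i)})=\delta_{kl}$ and extended $\mc A(U_i)$-bilinearly.

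Next I would glue these local metrics into a global one by setting
$$
g(\xi,\nu)=\sum_{i\in I}a_i\,g_i(\xi|_{U_i},\nu|_{U_i}),
$$
where each summand is understood as extended by zero outside $U_i$, which makes sense because $a_i$ is supported in $U_i$. The key observation justifying this expression is that a global derivation $\xi\in\tau(\mbf X)$ restricts naturally to a derivation $\xi|_{U_i}\in\tau(U_i)$ by the localization of $\mc A(\mbf X)$ to $\mc A(U_i)$; concretely, $\xi|_{U_i}=\sum_k \xi(a_j x_k^{(i)})\partial_k^{(i)}/a_j$ on the support of any partition function $a_j$ with $j$ in a neighborhood, but cleaner is to write $a_i\,g_i(\xi,\nu)$ directly in terms of the globally defined functions $\xi(a_i x_k^{(i)})$, in order to avoid invoking any localization principle.

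To conclude I would check the three axioms. Symmetry is inherited from symmetry of each $g_i$. Non-negativity $g(\xi,\xi)\geq 0$ follows because each $a_i\geq 0$ and each $g_i(\xi|_{U_i},\xi|_{U_i})\geq 0$ (being a sum of squares in the commutative algebra $\mc A(U_i)$). For the non-degeneracy axiom, suppose $g(\xi,\xi)=0$ as an element of $\mc A(\mbf X)$, i.e.\ almost everywhere on every chart; since $\sum_i a_i=1$, at almost every point $p$ there is at least one index $i$ with $a_i(p)>0$, and at such a point $g_i(\xi|_{U_i},\xi|_{U_i})$ must vanish, which by positive definiteness of the Euclidean form forces all coordinates $\xi(x_k^{(i)})(p)=0$; varying the chart yields $\xi=0$ in $\tau(\mbf X)$.

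The main technical obstacle is not the algebraic gluing but checking that the sum defining $g$ is well-defined as an element of $\mc A(\mbf X)$, which requires the local finiteness of the family $\{a_i\}$ (implicit in the construction of the previous lemma, where only finitely many $h_i$ are nonzero at any point of any compact piece $F_j$) and a careful justification that the restriction map $\tau(\mbf X)\to \tau(U_i)$ behaves as expected at the interface between smooth differential geometry on leaves and the Borel structure on the transverse direction. Once this is granted, the verification is essentially formal and follows the blueprint of the classical case.
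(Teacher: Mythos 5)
Your proof is correct and follows essentially the same route as the paper: local Euclidean metrics from the freeness of $\tau(U_i)$ (Lemma \ref{lem:trivial-free}), glued by the subordinate partition of unity, with the three axioms checked pointwise. The only cosmetic difference is that the paper writes the glued form as $\sum_i g_i\bigl(a_i^{1/2}\xi,a_i^{1/2}\nu\bigr)$, which sidesteps the restriction issue you flag by viewing $a_i^{1/2}\xi$ directly as a derivation supported in $U_i$, whereas you restrict first and multiply the scalar output by $a_i$; by bilinearity these agree.
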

\begin{proof}
	The lemma obviously holds for trivial random manifolds, since their tangent space is a free module. Let us consider a compact atlas $\mathfrak A=\{\varphi_{i}:U_{i}\to V_{i}\otimes \mbf K_i\}_{i\in I}$ of $\mbf X$ together with a subordinated partition of unity $\sum_{i\in I}a_i=1$. For every $\xi\in \tau(\mbf X)$ one can see $a_i^{1/2}\xi$ as a derivation on $\tau(U_i)$. If we fix a riemannian metric on e $g_i$ on each $U_i$, then we can write
	$$
	g(\xi,\nu)=\sum_i g_i\left(a_i^{1/2}\xi,a_i^{1/2}\nu\right)
	$$
	A straigthforward computation shows that $g$ is a riemannian metric on $\tau(\mbf X)$. 
\end{proof}

\begin{remark}\label{rem:restriction}
It follows from the lemmas above that every tangent vector $\xi\in \tau(\mbf X)$ can be viewed as a map $\xi:x\in \mbf X\mapsto \xi_x\in T_xL_x$ where $L_x$ is the leaf of $\mbf X$ which contains $x$ and $T_xL_x$ the usual tangent space of $L_x$ at $x$. Hence for every measurable open subset $U\subset \mbf X$, we have an homomorphisms of $\mc A(\mbf X)$-modules $r_U:\tau(\mbf X)\to \tau(U)$ given be the restriction $\xi\mapsto \xi_{|U}$. In the other hand, a riemannian metric $g$ can be viewed as a field of inner products 
$$
g:x\in \mbf X\mapsto (g_x:T_xL_x\otimes T_xL_x\to \R)
$$ 	
\end{remark}

\begin{theorem}
For every compact random manifold $\mbf X$, $\tau(\mbf X)$ is a finitely generated projectif module over $\mc A(\mbf X)$.
\end{theorem}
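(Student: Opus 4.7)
The plan is to adapt the Serre--Swan theorem to this setting: realize $\tau(\mbf X)$ as a direct summand of a free $\mc A(\mbf X)$-module of finite rank, using Lemma \ref{lem:trivial-free} (freeness in trivial charts), the partition of unity lemma, and Lemma \ref{lem:riemann-metric} (the Riemannian metric) as the three key inputs.

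The first step is to fix a compact atlas $\{\varphi_i: U_i \to V_i \otimes \mbf K_i\}_{i \in I}$ witnessing the compactness of $\mbf X$, reduce to $I = \{1, \ldots, N\}$ finite (an issue that itself warrants scrutiny, but which should follow from the compact cover by the $\varphi_i^{-1}(F_i \times \mbf K_i)$), and choose a subordinated partition of unity $\{a_i\}$. Writing $\partial_k^{(i)}$ for the coordinate vector fields in chart $i$ produced by Lemma \ref{lem:trivial-free}, I would define $X_{i,k} := a_i \partial_k^{(i)} \in \tau(\mbf X)$ by extension by zero outside $U_i$. To see that these generate $\tau(\mbf X)$, given $\xi \in \tau(\mbf X)$ I would write $\xi = \sum_i a_i \xi$, expand $r_{U_i}(\xi) = \sum_k f_{i,k} \partial_k^{(i)}$ via Lemma \ref{lem:trivial-free}, observe that each product $a_i f_{i,k}$ is supported in $U_i$ and so extends (after a standard cutoff) to a global $h_{i,k} \in \mc A(\mbf X)$, and conclude $\xi = \sum_{i,k} h_{i,k} X_{i,k}$. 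This yields a surjection $\pi: \mc A(\mbf X)^{Nn} \twoheadrightarrow \tau(\mbf X)$.

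For projectivity I would construct a splitting using the Riemannian metric $g$. Define $\sigma: \tau(\mbf X) \to \mc A(\mbf X)^{Nn}$ by $\sigma(\xi)_{i,k} := g(\xi, X_{i,k})$, and put $A := \pi \circ \sigma \in \op{End}_{\mc A(\mbf X)} \tau(\mbf X)$, so that $g(A(\xi), \xi) = \sum_{i,k} g(\xi, X_{i,k})^2$. Pointwise, since $\sum_i a_i(x) = 1$ some $a_i(x) > 0$, and for that index $\{X_{i,k}(x)\}_k = \{a_i(x) \partial_k^{(i)}(x)\}_k$ is a basis of $T_x L_x$ up to a positive scalar; this makes $A_x$ positive definite at every $x$. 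If $A$ is invertible in $\op{End}_{\mc A(\mbf X)}\tau(\mbf X)$, then $s := \sigma \circ A^{-1}$ splits $\pi$, giving $\tau(\mbf X) \oplus \ker \pi \cong \mc A(\mbf X)^{Nn}$ and the desired finite projectivity.

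The main obstacle is precisely the global invertibility of $A$: pointwise positive definiteness must be upgraded to a uniform essential lower bound on the smallest eigenvalue of $A_x$, so that $A^{-1}$ actually lies in $\op{End}_{\mc A(\mbf X)}\tau(\mbf X)$. I would aim for this by choosing the partition of unity so that at every $x \in \mbf X$ at least one $a_i(x)$ is bounded below by a fixed positive constant, which should be feasible thanks to the compact subsets $F_i \subset V_i$ appearing in the definition of a compact atlas; combined with the smoothness of $g$ on each compact $F_i$, this should yield the required uniform bound and close the proof.
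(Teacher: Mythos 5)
Your overall strategy---exhibit $\tau(\mbf X)$ as a direct summand of a finite free module $\mc A(\mbf X)^{Nn}$ using a finite compact atlas, the chart trivializations of Lemma \ref{lem:trivial-free}, and a subordinated partition of unity---is the same as the paper's, but your way of producing the splitting is genuinely different and is exactly what creates the obstacle you flag at the end. You build a surjection $\pi$ from the generators $X_{i,k}=a_i\partial_k^{(i)}$ and then try to split it with $\sigma(\xi)_{i,k}=g(\xi,X_{i,k})$, which forces you to invert the Gram-type operator $A=\pi\circ\sigma$. Pointwise positive definiteness of $A_x$ is not enough: for $A^{-1}$ to have coefficients in $\mc A(\mbf X)$ you need a transversally \emph{essential}, locally uniform lower bound on the spectrum of $A_x$. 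Your proposed fix (some $a_i(x)\geq 1/N$ everywhere, using the compact cores $F_i$) is on the right track but still leaves you needing a two-sided, transversally essentially uniform comparison between $g$ and the coordinate frames $\{\partial_k^{(i)}\}$ on the sets $\varphi_i^{-1}(F_i\times\mbf K_i)$, plus an argument (Cramer's rule chart by chart, say) that the resulting inverse is a legitimate element of $\End_{\mc A(\mbf X)}\tau(\mbf X)$. None of this is fatal, but it is real work in this measurable setting.

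The paper eliminates the inversion entirely with the classical square-root trick: it embeds $\xi\mapsto(a_1^{1/2}\xi,\dots,a_k^{1/2}\xi)\in\mc A(\mbf X)^{kn}$ via the identifications $\tau(U_i)\simeq\mc A(U_i)^n$, and the obvious retraction $(v_i)\mapsto\sum_i a_i^{1/2}v_i$ composes with this embedding to give $\sum_i a_i=\op{id}$ \emph{exactly}, so the associated idempotent splits $\mc A(\mbf X)^{kn}$ with no spectral estimate whatsoever. If you replace your weights $a_i$ by $a_i^{1/2}$ (arranging $a_i=b_i^2$ with $b_i$ smooth) and define $\sigma$ chart by chart using the Euclidean pairings of the trivializations rather than the global metric $g$, your operator $A$ becomes the identity and the whole difficulty disappears; the Riemannian metric of Lemma \ref{lem:riemann-metric} is then not needed for this theorem. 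Two smaller points: in your generation argument you want $h_{i,k}=\chi_i f_{i,k}$ with $\chi_i\equiv 1$ on the support of $a_i$, since taking $h_{i,k}=a_if_{i,k}$ gives $\sum_k h_{i,k}X_{i,k}=a_i^2\xi$ rather than $a_i\xi$; and the reduction to a finite atlas is indeed not automatic from the definition of compactness (finite cost does not force finitely many charts), though the paper silently assumes a finite atlas as well.
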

\begin{proof}
Let $\dim X=n$ and let $\mathfrak A=\{\varphi_i:U_i\to V_i\otimes \mbf K_i\}_{i=1}^k$ be a compact smooth atlas of $\mbf X$ together with a subordinated partition of unity $\sum_ia_i=1$. Since
	$$
	a_i^{1/2} \tau(\mbf X)=a_i^{1/2} \tau(U_i)\simeq a_i^{1/2}\mc A(U_i)^n=a_i^{1/2}\mc A(\mbf X)^n\subset \mc A(\mbf X)^n
	$$ 
	the map 
	$$
	\xi\in \tau(\mbf X)\mapsto (a_1^{1/2}\xi,\cdots,a_k^{1/2}\xi)\in \mc A(\mbf X)^{kn}
	$$
	is injective and puts $\tau(\mbf X)$ as a sub-module of $\mc A(\mbf X)^{kn}$.  A straightforward computation shows that the orthogonal complement of $\tau(\mbf X)$ in $\mc A(\mbf X)^{kn}$ with respect to the usual $\mc A(\mbf X)$-inner product $\braket{a|b}=\sum a_ib_i$ is a $\mc A(\mbf X)$-module $\tau(\mbf X)^{\perp}$ verifying $\tau(\mbf X)\oplus \tau(\mbf X)^{\perp}=\mc A(\mbf X)^{kn}$.  
	\end{proof}

\section{Low dimesional cobordism groups}
In the sequel, every random manifold shall be supposed to be compact. The boundary of a random manifold with boundary $\mbf X$ is a random manifold without boundary that shall be noted $\bds \partial \mbf X$. If $\mbf X$ is oriented, its boundary inherits a natural orientation. Given an oriented random manifold $\mbf X$, we shall note $-\mbf X$ the same random manifold endowed with the opposite orientation. Given two random manifolds without boundary $\mbf X$ and $\mbf Y$, we shall note $\mbf X+\mbf Y$ their disjoint union and $\mbf X-\mbf Y=\mbf X+(-\mbf Y)$. We say that $\mbf X$ and $\mbf Y$ are cobordant if there exists an oriented random manifold with boundary $\mbf M$ such that $\bds \partial\mbf M=\mbf X-\mbf Y$. Cobordism defines an equivalence relation on the set of oriented random manifolds and we note $[\mbf X]$ the cobordism class of $\mbf X$. The set of cobordism classes of oriented random manifolds, together with the operation $[\mbf X]+[\mbf Y]=[\mbf X+\mbf Y]$ is an abelian group that we shall note $\mbf R\bds \Omega^{SO}$. It has a natural grading induced by the dimension, i.e. 
$$
\mbf R\bds \Omega^{SO}=\bigoplus_{n=0}^{\infty} \mbf R\bds \Omega^{SO}_n
$$
where $\mbf R\bds \Omega^{SO}_n$ is the group of cobordism classes of oriented random $n$-manifolds. The neutral element of $\mbf R\bds \Omega^{SO}$ is the cobordism class of the empty random manifold and will be noted $[\bds 0]$.

On can easily see that $\mbf R\bds \Omega_0\simeq \R$. Indeed, since an oriented connected $0$-manifold is signed point $x=\pm 1$, a random $0$-manifold is given by a finite standard space $\mbf K$ together with a measurable partition $\mbf K=\mbf K^++ \mbf K^-$. Moreover $[\mbf K]=[\bds 0]$ if and only if $\mu(\mbf K^+)=\mu(\mbf K^-)$. Hence the map $\mbf K\mapsto \mu(\mbf K^+)-\mu(\mbf K^-)$ induces an isomorphism of abelian groups $\Phi_0:\mbf R\bds\Omega^{SO}_0\to \R$.

\medskip
We devote the rest of this section to prove the following: 

\begin{theorem}
	Every oriented random $1$-manifold is the boundary of an oriented random $2$-manifold or, in other words, that
	$$
	\mbf R\bds\Omega^{SO}_1=0.
	$$
\end{theorem}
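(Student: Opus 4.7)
The plan is to construct, for each compact oriented random 1-manifold $\mathbf X$, a compact oriented random 2-manifold $\mathbf M$ with $\bds\partial\mathbf M\cong\mathbf X$, by realizing leaf-by-leaf the canonical bounding of an oriented 1-manifold---a disk for a circle leaf, a closed half-plane for a line leaf---in a measurable way.

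Because a connected oriented 1-manifold is either $S^1$ or $\mathbb R$ and ``the leaf through $x$ is compact'' is a Borel condition, $\mathbf X$ splits as $\mathbf X_c+\mathbf X_\ell$, and by additivity in $\mbf R\bds\Omega^{SO}_1$ it suffices to bound each summand separately. The main tool is a \emph{folded collar}: given a measurable leaf-preserving orientation-reversing involution $\iota\colon\mathbf X\to\mathbf X$, set
\[
\mathbf M\;:=\;\mathbf X\times[0,1]\big/\bigl((x,1)\sim(\iota(x),1)\bigr).
\]
Leaf-by-leaf the quotient $L\times[0,1]/\iota|_L$ is homeomorphic to the disk when $L=S^1$ and to the closed half-plane when $L=\mathbb R$, in both cases with oriented boundary a single copy of $L$; hence $\bds\partial\mathbf M\cong\mathbf X$. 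Away from $t=1$ the smooth structure on $\mathbf M$ is the obvious product; at a fold point the complex coordinate $w=z^2$ with $z=y+i(1-t)$ furnishes a smooth chart (the identification $(y,1)\sim(-y,1)$ is exactly $z\sim -z$ on the real axis, on which $z^2=(-z)^2$). Compactness and finite cost are inherited from those of $\mathbf X$: each chart $V\otimes\mathbf K$ of $\mathbf X$ produces a chart $V\times[0,1]\otimes\mathbf K$ of $\mathbf M$, plus a bounded number of extra charts near the fold locus.

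Constructing $\iota$ on $\mathbf X_c$ is straightforward: the axes of reflection of the circle leaves form an $S^1$-torsor over a Borel transversal, from which a measurable section exists. The genuine technical difficulty is constructing $\iota$ on $\mathbf X_\ell$, because an orientation-reversing involution of $\mathbb R$ has a unique fixed point, so $\iota$ corresponds to a Borel transversal meeting each line leaf in exactly one point---a Borel fundamental domain for the leaf equivalence relation on $\mathbf X_\ell$. For ergodic line laminations such as the Kronecker flow on $T^2$ no such global transversal exists, and one must instead proceed by cellwise folding along a Kakutani--Rokhlin/Ambrose decomposition of the transverse $\mathbb R$-action: $\mathbf X_\ell$ is exhibited as a disjoint union of ``flow-under-a-function'' blocks, the fold is performed on each block, and the adjacent blocks are glued together with smoothing along the fold locus. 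All remaining verifications are local and reduce to the single-cell case.
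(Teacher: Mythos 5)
Your reduction of $\mathbf X$ to a compact-leaf part $\mathbf X_c$ and a line-leaf part $\mathbf X_\ell$ is sound (the paper makes the same split), and your treatment of $\mathbf X_c$ works: the paper gets the same conclusion more directly by choosing a Borel fundamental domain for the finite leaf equivalence relation, trivializing $\mathbf X_c\simeq\mathbb S\times\mathbf K$, and bounding it by $\mathbb D\times\mathbf K$, but your folded collar with a measurably selected family of reflections is an acceptable variant. The folded-collar construction itself, \emph{wherever the involution $\iota$ exists}, is also legitimate, including the $w=z^2$ chart at a fold fixed point.

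The gap is in the line-leaf part, exactly at the point you flag and then dispose of in one sentence. As you note, a leafwise orientation-reversing involution of $\mathbf X_\ell$ is equivalent to a Borel transversal meeting every leaf exactly once, i.e.\ to smoothness of the leaf equivalence relation, and this fails for an ergodic example such as the Kronecker lamination. Your proposed repair --- fold each cell of a Kakutani--Rokhlin/Ambrose decomposition and glue --- does \emph{not} reduce to the single-cell case, because the difficulty is the gluing between cells, which is global. Concretely, write $\mathbf X_\ell$ as a suspension: $[0,1]\times\mathbf K$ with $(1,k)\sim(0,\gamma(k))$ for the return map $\gamma\in\Aut(\mathbf K)$. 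Folding the thickened cell identifies the top corners $(0,k,1)\sim(1,k,1)$, while the suspension identification on the vertical walls gives $(1,k,1)\sim(0,\gamma(k),1)$; chaining these identifies the entire $\gamma$-orbit of top-corner points to a single point, and the quotient is not a manifold. Any other scheme for matching the folds of adjacent cells must solve a consistency problem along whole (non-compact) leaves, which is precisely the obstruction you started from; so the claim that ``all remaining verifications are local'' is where the proof breaks. The paper circumvents this with a genuinely non-local argument: it shows $\mathbf X_\ell\simeq\bds\Sigma_\gamma$, constructs an explicit pair-of-pants cobordism proving that $\gamma\mapsto[\bds\Sigma_\gamma]$ is a homomorphism into the abelian group $\mbf R\bds\Omega^{SO}_1$, and invokes Fathi's theorem that $\Aut(\mathbf K)$ is simple (hence perfect), so the homomorphism is trivial. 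Geometrically this amounts to writing $\gamma$ as a product of commutators and bounding $\bds\Sigma_\gamma$ by the suspension of a surface-group representation; some input of this kind about $\Aut(\mathbf K)$ is needed, and your sketch does not supply a substitute for it.
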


 We shall split the proof in several lemmas. Let us start by recalling some basic constructions and facts. Let $\mbf K$ a finite standard measure space. The {\em suspension} of a measure preserving automorphism $\gamma\in \op{Aut}(\mbf K)$ is the oriented random $1$-manifold $\mbf \Sigma_\gamma$ obtained from the trivial random $1$-manifold with boundary $[0,1]\times \mbf K$ by the following identifications
$$
(0,t)\sim (1,\gamma(t))\quad,\quad t\in \mbf K
$$

\begin{lemma}\label{compact2zero}
Let $\mbf X$ an oriented random $1$-manifold such that almost every leave of $\mbf X$ is compact. Then $[\mbf X]=[\bds 0]$.
\end{lemma}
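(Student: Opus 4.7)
The plan is to build an oriented random $2$-manifold with boundary $\mbf M$ satisfying $\bds\partial\mbf M=\mbf X$ by filling each compact leaf of $\mbf X$ with a $2$-disk. By the classification of compact connected oriented $1$-manifolds without boundary, almost every leaf of $\mbf X$ is an oriented circle $S^1$; the measure-zero set of non-compact leaves can be discarded without changing the cobordism class. Using Lemma~\ref{lem:riemann-metric}, fix a riemannian metric on $\mbf X$ so that each compact leaf $L$ carries a well-defined length $\ell(L)>0$ and is isometric to $\R/\ell(L)\Z$.

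Concretely, I would set $\mbf M=(\mbf X\otimes[0,1])/\!\sim$, where $(x,1)\sim(x',1)$ whenever $x$ and $x'$ lie in the same leaf of $\mbf X$. The cylinder $\mbf X\otimes[0,1]$ is an oriented random $2$-manifold with boundary $(\mbf X\otimes\{0\})+(\mbf X\otimes\{1\})$, and collapsing the top of each cylindrical leaf $L\otimes[0,1]$ to a single point turns it leafwise into a topological disk with boundary $L\otimes\{0\}$. The transverse invariant measure on $\mbf M$ is inherited from that of $\mbf X$ via the projection onto the first factor, and by construction $\bds\partial\mbf M=\mbf X\otimes\{0\}\cong\mbf X$ as oriented random $1$-manifolds.

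The main obstacle is to upgrade this topological construction to a compact smooth random $2$-manifold of finite cost with an invariant transverse measure in the sense of Section~2, since the literal cone on a circle is not smooth at its apex. I would handle this by replacing the cone on each leaf of length $\ell$ with a euclidean disk of circumference $\ell$ parametrised by polar coordinates $(r,\theta)\in[0,\ell/2\pi]\times \R/\ell\Z$, so that the apex becomes a regular interior point. Since the length function is measurable on the leaf space, this smoothing can be carried out uniformly by extending each chart $\varphi_i:U_i\to V_i\otimes\mbf K_i$ of $\mbf X$ (with $V_i\subset\R$ an open arc) into a two-dimensional euclidean neighborhood, yielding the required oriented smooth atlas of finite cost on $\mbf M$ and concluding $[\mbf X]=[\bds 0]$.
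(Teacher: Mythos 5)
Your topological picture---fill each circle leaf with a disk---is the same as the paper's, but the way you execute it buries the only nontrivial point of the lemma, which is measure-theoretic rather than topological. To make the leafwise cone into a random $2$-manifold you must exhibit local charts $U\to V\otimes \mbf K$ around the set of apexes, and your polar coordinates $(r,\theta)\in[0,\ell/2\pi]\times\R/\ell\Z$ presuppose, on each leaf, a choice of origin $\theta=0$; for the chart to be a Borel isomorphism (and for the transverse invariant measure to be defined on a transversal through the apexes) this choice must depend measurably on the leaf. Equivalently, you need a Borel subset of $\mbf X$ meeting almost every leaf in exactly one point. This is precisely the step the paper isolates: on a complete transversal $T$ the leafwise equivalence relation $\mc R_T$ has almost all classes finite (compact leaves plus a compact atlas), and a finite standard Borel equivalence relation admits a Borel fundamental domain $\mbf K\subset T$. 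The same fact is needed to see that your apex set---the quotient of $\mbf X$ by the leaf relation---is a standard Borel space at all. As written, the sentence ``this smoothing can be carried out uniformly'' is where all of this should live, so it is a genuine gap.

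Once the fundamental domain is granted, the paper's order of operations makes the rest of your work unnecessary: the selection immediately yields an isomorphism $\mbf X\simeq \mbb S\times\mbf K$ with a trivial random circle, and the filling is then literally the prism $\mbb D\times\mbf K$, which is manifestly a compact smooth oriented random $2$-manifold of finite cost with boundary $\mbf X$---no quotient, no cone point, no smoothing to check. Two smaller remarks: discarding the non-compact leaves requires knowing they form a Borel saturated (here null) set, which is the content of Lemma~\ref{every2noncompact} and should not be used silently; and the riemannian metric of Lemma~\ref{lem:riemann-metric} is not needed, since the trivialization already identifies every leaf with the standard circle.
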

\begin{proof}
Let $T$ be a transversal of $\mbf X$ and let us consider the standard equivalence relation $\mc R_T\subset T\times T$ whose equivalence classes are given by the intersection of $T$ with the leaves of $\mbf X$. Since the leaves are almost all compact, the equivalence classes of $\mc R_T$ are almost all finite. It is well known that such an equivalence relation has a fundamental domain, i.e. a Borel subset $\mbf K\subset T$ which intersects almost every leave in exactly one point, and one has an obvious isomorphism of random manifolds between $\mbf X$ and a random circle $\mbf X\simeq \mbb S\times \mbf K$ which is the boundary of $\mbb D\times \mbf K$.
\end{proof}

\begin{lemma}\label{every2noncompact}
 For every oriented random $1$-manifold $\mbf X$ there exists an oriented random $1$-manifold $\mbf X'$ without compact leaves such that $[\mbf X]=[\mbf X']$.
\end{lemma}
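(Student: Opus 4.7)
The strategy is to partition $\mbf X$ into its compact-leaf and non-compact-leaf parts, null-cobord the former via Lemma~\ref{compact2zero}, and keep the latter as $\mbf X'$.

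First I would use Lemma~\ref{lem:riemann-metric} to endow $\mbf X$ with a Riemannian metric $g$, and combine $g$ with the orientation to pick the leafwise unit tangent vector field $v\in\tau(\mbf X)$. Integrating $v$ gives a flow $\phi_t$ which, in any chart $\varphi_i:U_i\to V_i\otimes\mbf K_i$, is smooth in the base coordinate and measurable in the vertical, and which parametrises each leaf by oriented arc length. Because $\mbf X$ is compact in our sense, the leaves of $\mbf X$ are complete $1$-manifolds and the flow is globally defined on each of them. A leaf is then compact (hence a circle) if and only if the flow through any of its points is periodic.

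Accordingly, set
$$
\mbf X_c=\bigcup_{n\geq 1}\left\{x\in\mbf X : \exists\, T\in(0,n],\ \phi_T(x)=x\right\},\qquad \mbf X_{nc}=\mbf X\setminus\mbf X_c.
$$
Since $\phi_T(x)=x$ is a Borel condition in $(x,T)$, a chart-by-chart argument shows that $\mbf X_c$ is a Borel, leaf-saturated subset of $\mbf X$. In each chart the image $\varphi_i(U_i\cap\mbf X_c)$ is then a union of plaques, hence of the form $V_i\otimes\mbf K_i^c$ for some Borel $\mbf K_i^c\subset\mbf K_i$; restricting the charts and the transverse invariant measure therefore equips $\mbf X_c$, and likewise $\mbf X_{nc}$, with the structure of a compact oriented random $1$-manifold whose atlas has cost bounded by that of $\mbf X$. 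By construction every leaf of $\mbf X_c$ is compact and no leaf of $\mbf X_{nc}$ is.

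Finally, the disjoint-union decomposition $\mbf X=\mbf X_c+\mbf X_{nc}$ gives $[\mbf X]=[\mbf X_c]+[\mbf X_{nc}]$ in $\mbf R\bds\Omega_1^{SO}$, and Lemma~\ref{compact2zero} applies to $\mbf X_c$ to yield $[\mbf X_c]=[\bds 0]$; hence $[\mbf X]=[\mbf X_{nc}]$ and one takes $\mbf X'=\mbf X_{nc}$. The principal difficulty is the measurability step: one must check that ``$x$ lies on a compact leaf'' is a Borel condition compatible with the BT-structure, and that the two resulting pieces inherit a compact smooth atlas of finite cost so that the preceding lemma is actually applicable. Everything else is essentially formal.
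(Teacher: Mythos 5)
Your proposal is correct and follows the same overall scheme as the paper: split $\mbf X$ into the saturated set of compact leaves and its complement, apply Lemma~\ref{compact2zero} to the former, and keep the latter as $\mbf X'$. The only real content of the lemma is the measurability of the compact-leaf locus, and there your route differs from the paper's. You detect compact leaves dynamically, via periodicity of the unit-speed flow: $\mbf X_c=\bigcup_n\{x:\exists\,T\in(0,n],\ \phi_T(x)=x\}$. This works, but the existential quantifier over the uncountable parameter $T$ is a projection, so Borelness is not automatic; you need either to observe that leafwise the periodic set of period in $[1/m,n]$ is closed, or to invoke the projection theorem for Borel sets with compact sections, and you leave this (the step you yourself flag as the principal difficulty) unargued. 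The paper avoids the issue entirely by working transversally: it fixes a complete transversal $T$, notes that compactness of $\mbf X$ forces a leaf $L$ to be compact exactly when $T\cap L$ is finite, and then uses the standard fact that the finite classes of the countable Borel equivalence relation $\mc R_T$ form a Borel subset of $T$; saturating gives the Borel decomposition $\mbf X=\mbf F+\mbf X'$ directly. Both arguments are legitimate; the transversal one is shorter and stays closer to the machinery already set up in Lemma~\ref{compact2zero}, while yours has the mild advantage of not presupposing the equivalence ``compact $\Leftrightarrow$ finite trace on a complete transversal,'' at the cost of the descriptive-set-theoretic care noted above.
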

\begin{proof}
Let $\mbf X$ be a $1$-lamination and let $T$ be a transversal which intersects every leaf of $\mbf X$. Since $\mbf X$ is compact as random manifold, a leaf $L$ of $\mbf X$ is compact if and only if $T\cap L$ is finite. The finite classes of $\mc R_T$ form a Borel subset $\mbf K\subset T$, which implies that the compact leaves of $\mbf X$ form a random $1$-manifold $\mbf F\subset \mbf X$. Therefore we can write $\mbf X= \mbf F+\mbf X'$ where $\mbf X'=\mbf X\backslash\mbf F$ has no compact leaves by construction. By the lemma \ref{compact2zero} above we have
$$
[\mbf X]=[\mbf F]+ [\mbf X']= [\bds 0]+[\mbf X']=[\mbf X']
$$
\end{proof}

\begin{lemma}\label{every=suspension}
 For every oriented random $1$-manifold $\mbf X$ there exists a finite standard measured space  $\mbf K$ and a measure preserving automorphism $\gamma\in \op{Aut}(\mbf K)$ such that $\mbf X\simeq \mbf \Sigma_\gamma$.
 \end{lemma}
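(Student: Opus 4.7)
The plan is to first apply Lemma \ref{every2noncompact} to reduce to the case where no leaf of $\mathbf{X}$ is compact, so that every leaf, being an oriented connected non-compact $1$-manifold, is diffeomorphic to $\mathbb{R}$ with its canonical orientation. The strategy is then to produce a Borel transversal $\mathbf{K}$ of finite measure meeting every leaf in a bi-infinite discrete subset, and to take $\gamma$ to be the forward first-return map.

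Concretely, I fix a finite compact oriented smooth atlas $\mathfrak{A} = \{\varphi_i : U_i \to V_i \otimes \mathbf{K}_i\}_{i=1}^{k}$ of $\mathbf{X}$, where after subdividing if necessary each $V_i$ is a bounded open interval of $\mathbb{R}$ oriented compatibly with $\mathbf{X}$. Choosing an interior point $c_i \in V_i$ for each $i$, I set
$$
\mathbf{K} \;=\; \bigsqcup_{i=1}^{k} \varphi_i^{-1}\bigl(\{c_i\} \times \mathbf{K}_i\bigr).
$$
This is a Borel transversal of $\mathbf{X}$ with $\mu(\mathbf{K}) \leq c(\mathfrak{A}) < \infty$, and since every point of $\mathbf{X}$ lies in some chart, each plaque traversed by a leaf $L$ intersects $\mathbf{K}$ in exactly one point. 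In particular $L \cap \mathbf{K} \neq \emptyset$.

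The next step is to check that $L \cap \mathbf{K}$ is bi-infinite along the orientation of every leaf $L$. Using the riemannian metric of Lemma \ref{lem:riemann-metric} together with the compactness of the atlas, one bounds the length of every plaque by a uniform constant $C > 0$; since leaves are oriented copies of $\mathbb{R}$, each one traverses infinitely many plaques in each direction and hence crosses $\mathbf{K}$ infinitely often on each side. The first-return map $\gamma : \mathbf{K} \to \mathbf{K}$, sending $x$ to the next point of $L_x \cap \mathbf{K}$, is then globally defined. It is Borel because on each plaque it factors as a finite composition of the Borel chart-change isomorphisms $\gamma_{ij}$ appearing in the formula for $\varphi_i \circ \varphi_j^{-1}$, and it preserves $\mu$ because these are precisely the holonomy transformations under which the transverse measure is invariant.

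Finally, I construct the isomorphism $\psi : \mathbf{\Sigma}_\gamma \to \mathbf{X}$ by sending $(s,x) \in [0,1] \times \mathbf{K}$ to the point at proportional arc length $s$ along the oriented riemannian segment of $L_x$ from $x$ to $\gamma(x)$. One checks that $\psi$ is smooth on each chart, bijective, respects the identification $(0,x) \sim (1,\gamma(x))$, and preserves the transverse measure. The step I expect to be most delicate is the bi-infinite recurrence of $\mathbf{K}$ in every leaf: one must rule out, using compactness of the atlas and the uniform length bound on plaques, the possibility of a positive-measure family of leaves that ``escape'' the chart cover after only finitely many crossings of $\mathbf{K}$. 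Once this is established, the rest of the argument is essentially a chart-by-chart verification.
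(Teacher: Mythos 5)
Your strategy coincides with the paper's: choose a finite complete transversal $\mathbf{K}$, cut $\mathbf{X}$ along it (equivalently, flow from each point of $\mathbf{K}$ to its first return), and read off $\gamma$ as the resulting end-identification. The paper's own proof is far terser --- it simply cuts along ``a finite complete transversal'' and \emph{asserts} that almost every leaf of $\hat{\mathbf{X}}$ is an interval --- so your explicit construction of $\mathbf{K}$ from a compact atlas, and your isolation of the bi-recurrence of $\mathbf{K}$ in every non-compact leaf as the real content, are exactly the right things to focus on; the paper offers no argument for that point.

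Two repairs are needed. First, the opening reduction is miscalibrated: Lemma \ref{every2noncompact} only gives $[\mathbf{X}]=[\mathbf{X}']$, an equality of cobordism classes, while the statement to prove is an isomorphism. What you should use is the underlying decomposition $\mathbf{X}=\mathbf{F}+\mathbf{X}'$ from its proof: the compact-leaf part satisfies $\mathbf{F}\simeq\mathbb{S}\times\mathbf{K}_0=\Sigma_{\mathrm{id}}$, your argument handles $\mathbf{X}'$, and a disjoint union of suspensions is the suspension of the disjoint union of the automorphisms. Second, the recurrence argument as written does not close: the length of a plaque $\varphi_i^{-1}(V_i\times\{t\})$ is $\int_{V_i}\sqrt{g(\partial_x,\partial_x)}$ with integrand only in $C^{\infty}(V_i,L^{\infty}(\mathbf{K}_i))$, so there is no uniform bound $C$ over all plaques, and even granting one, ``traverses infinitely many plaques \emph{in each direction}'' is precisely the assertion at stake, not a consequence of the leaf being a copy of $\R$. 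The clean argument is topological and uses the compact cores rather than the metric: if $x\in\mathbf{K}$ had no forward return, then every plaque meeting the forward ray from $x$ would have its unique $\mathbf{K}$-point at or behind $x$ and, being connected, would contain $x$; since there are finitely many charts there are only finitely many plaques through $x$, and their compact cores $\varphi_i^{-1}(F_i\times\{t\})$ would then cover the closed forward ray, forcing it to be compact --- absurd for a non-compact leaf. With that substitution (and the same statement backwards), your first-return map is everywhere defined on the non-compact part and the rest of your chart-by-chart verification goes through.
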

 \begin{proof}
 Let $\mbf K$ be a finite complete transversal of $\mbf X$. If we cut $\mbf X$ along $\mbf K$, we obtain a random $1$-manifold with boundary $\hat{\mbf X}$ whose almost every leaf is diffeomorphic to the interval $[0,1]$. As in the proof of lemma \ref{compact2zero}, $\hat{\mbf X}$ is diffeomorphic to the trivial $1$-manifold $[0,1]\times \mbf K$ in such a way that the quotient map $\hat{\mbf X}\to \mbf X$ preserves the orientation. Hence every point $(0,t)\in \hat{\mbf X}$ is identified to exactly one point of the form $(1,s)$, and the map $\gamma:t\in \mbf K\to s\in \mbf K$ is a measure preserving automorphism of $\mbf K$ such that $\mbf X\simeq \mbf \Sigma_\gamma$.
 \end{proof}

\begin{lemma}\label{homo}
For every finite standard measure space $\mbf K$, the map
\begin{equation*}
\gamma\in \op{Aut}(\mbf K)\mapsto \left[\mbf \Sigma_{\gamma}\right]\in \mbf R\bds\Omega^{SO}_{1}
\end{equation*}
is a homomorphism of groups.
\end{lemma}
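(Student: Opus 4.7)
The plan is to verify directly the two defining properties of a group homomorphism, in both cases by exhibiting an explicit null-cobordism.

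First, I would check that $[\mbf\Sigma_{\op{id}}]=[\bds 0]$. The suspension of the identity is the random circle $\mbb S^1\otimes\mbf K$, which is the boundary of the compact oriented random $2$-manifold $\mbb D^2\otimes\mbf K$ (the closed disk $\mbb D^2$ admits a single oriented smooth chart, so $\mbb D^2\otimes\mbf K$ carries a compact oriented atlas of cost $\mu(\mbf K)<\infty$). Hence $\mbf\Sigma_{\op{id}}$ bounds.

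The core of the argument is the additivity $[\mbf\Sigma_{\gamma\circ\delta}]=[\mbf\Sigma_\gamma]+[\mbf\Sigma_\delta]$, for which I would construct a compact oriented random $2$-manifold $\mbf M$ whose boundary is isomorphic to $\mbf\Sigma_\gamma+\mbf\Sigma_\delta-\mbf\Sigma_{\gamma\circ\delta}$. The natural candidate is a ``pair-of-pants bundle'' over $\mbf K$ with prescribed monodromies. Fix a smooth compact oriented pair of pants $P$, for instance the $2$-sphere with three disjoint open disks $D_0,D_1,D_2$ removed, and equip $\pi_1(P)\cong F_2$ with a basis $\alpha_1,\alpha_2$ represented by small loops around $D_1,D_2$ carrying the boundary orientation induced from $P$. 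With these conventions the three oriented boundary classes satisfy $[\partial D_0]\cdot[\partial D_1]\cdot[\partial D_2]=1$, so $[\partial D_0]=(\alpha_1\alpha_2)^{-1}$. Define the monodromy representation $\rho:\pi_1(P)\to\op{Aut}(\mbf K)$ by $\rho(\alpha_1)=\gamma$ and $\rho(\alpha_2)=\delta$, and set
\[
\mbf M \;=\; (\widetilde P\times\mbf K)/\pi_1(P),
\]
where $\pi_1(P)$ acts diagonally, by deck transformations on the universal cover $\widetilde P$ and via $\rho$ on $\mbf K$. Since the deck action is free and orientation preserving and $\rho$ acts by measure-preserving automorphisms, $\mbf M$ inherits the structure of a compact oriented random $2$-manifold realised as a bundle over $P$ with fiber $\mbf K$; its restriction over each boundary circle of $P$ is by construction the mapping torus of the monodromy along that circle, i.e.\ the suspension of the corresponding element of $\op{Aut}(\mbf K)$. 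Combined with the orientation-reversing isomorphism $\mbf\Sigma_{\phi^{-1}}\cong -\mbf\Sigma_\phi$ given by $(s,t)\mapsto(1-s,t)$, this yields $\bds\partial\mbf M\cong\mbf\Sigma_\gamma+\mbf\Sigma_\delta+\mbf\Sigma_{(\gamma\delta)^{-1}}\cong\mbf\Sigma_\gamma+\mbf\Sigma_\delta-\mbf\Sigma_{\gamma\circ\delta}$, as desired.

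The main obstacle I anticipate is the careful verification that $\mbf M$ is truly a compact random $2$-manifold in the paper's sense, i.e.\ that it is equipped with a compact smooth oriented atlas of finite cost together with a transverse invariant measure. I would handle this by picking a compact fundamental domain $F\subset\widetilde P$ for the $\pi_1(P)$-action together with a finite oriented smooth atlas of $P$, pulling the latter back to $F$ and taking vertical $\mbf K$ in every chart; the resulting atlas of $\mbf M$ is then finite and compact, with cost at most $N\cdot\mu(\mbf K)<\infty$ where $N$ is the number of charts, and the transverse invariant measure descends from the product measure on $F\times\mbf K$. The cocycle condition on the changes of charts across the ``cuts'' introduced by the fundamental domain is precisely the statement that $\rho$ is a group homomorphism, which is our starting hypothesis.
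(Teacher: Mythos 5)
Your argument is correct and it realizes the very same cobordism the paper has in mind: a random $2$-manifold lying over a pair of pants whose three boundary circles carry the monodromies $\gamma$, $\delta$ and $(\gamma\delta)^{-1}$. Where you differ is in how that object is produced. The paper's proof is essentially pictorial: $\mbf M$ is described as a fibration over a singular surface with singular random $1$-manifolds as fibers, the smoothness of the leaves is inferred from the Morse type of the singularities, and the existence of a compact oriented atlas of finite cost with an invariant transverse measure is asserted from the local triviality of the picture. You instead take the flat bundle $(\widetilde P\times\mbf K)/\pi_1(P)$ associated to the representation $\rho:F_2\to\op{Aut}(\mbf K)$ sending the two generators to $\gamma$ and $\delta$; the boundary computation then reduces to the relation $[\partial D_0][\partial D_1][\partial D_2]=1$ in $\pi_1(P)$ together with the isomorphism $\mbf\Sigma_{\phi^{-1}}\cong-\mbf\Sigma_\phi$. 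This buys an honest atlas, an explicit finite-cost bound, and a transverse invariant measure visibly inherited from $\mu$ because $\rho$ acts by measure-preserving automorphisms --- precisely the points the paper leaves to the reader. Two small remarks: the monodromy around $\partial D_0$ is only $(\gamma\delta)^{-1}$ up to conjugacy (the relation in $\pi_1$ depends on basing paths), so you should record the isomorphism $\mbf\Sigma_{\psi\phi\psi^{-1}}\cong\mbf\Sigma_\phi$, $(s,t)\mapsto(s,\psi(t))$, which you use tacitly; and the preliminary verification that $\mbf\Sigma_{\op{id}}$ bounds is redundant, since it follows from additivity applied to $\op{id}=\op{id}\circ\op{id}$.
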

\begin{proof}
Let $\phi$ and $\psi$ two measure preserving automorphisms of $\mbf K$. It suffices to prove that there exists a cobordism between the disjoint union $\mbf \Sigma_\phi+ \mbf \Sigma_\psi$ and $\mbf \Sigma_{\phi\circ\psi}$, i.e. an oriented random $2$-manifold with boundary $\mbf M$ such that
$$
\bds\partial\mbf M=\mbf \Sigma_\phi+ \mbf \Sigma_\psi-\mbf \Sigma_{\phi\circ\psi}
$$
. For the sake of clarity, we will explain graphically how it works. The random $2$-manifold $\mbf M$ is given as a fibration over the singular surface with boundary on the left of figure \ref{fig:r}, where the fibers are the random singular $1$-manifolds on the right. It is obvious from the picture that the singularities on the leaves of $\mbf M$ are of Morse type, which implies that the leaves of $\mbf M$ are smooth manifolds. The local triviality and the compacity of $\mbf M$ follows from the local triviality of the fibration. 
\end{proof}

\begin{figure}\label{fig:r}
\begin{center}
\includegraphics[width=10cm,height=8cm]{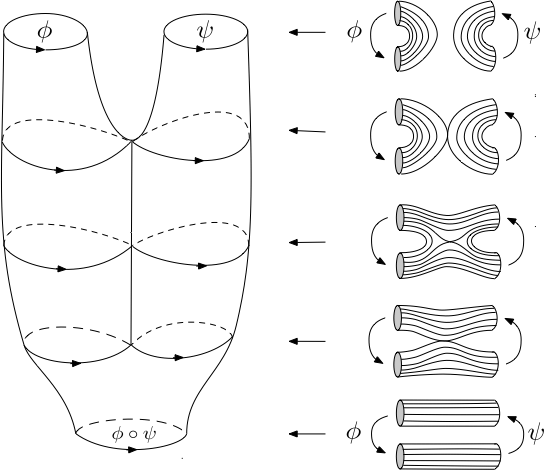}
\caption{Cobordism between $\bds \Sigma_\phi+\bds \Sigma_\psi$ and $\bds \Sigma_{\phi\circ\psi}$}	
\end{center}
\bigskip
\end{figure}

The combination of the above lemmas allows us to conclude. Indeed, since $\op{Aut}(\mbf K)$ is a simple group (see Fathi \cite{fathi78automorphimssimple}) and $\mbf R\bds \Omega^{SO}_1$ is abelian, every cobordism class on the image of the homomorphism of lemma \ref{homo} vanishes. Hence, by lemma \ref{every=suspension}, this will be the case for every cobordism class in $\mbf R\bds \Omega^{SO}_{1}$, which completes the proof.

\section{Higher dimensional cobordism groups}
We shall prove in this section the following result:

\begin{theorem}\label{thm:pontryagin}
	For every $n\geq 0$ there exists a surjectif homomorphism of abelian groups
	$$
	\mbf R\bds\Omega_{4n}^{SO}\to \R^{p(n)}
	$$
	where $p(n)$ is the number of partitions of $n$, i.e. the number of different ways to write $n$ as a sum of natural numbers.
\end{theorem}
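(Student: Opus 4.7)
The plan is to construct, for each partition $\lambda$ of $n$, an expected Pontryagin number $\Phi_\lambda\colon\mbf R\bds\Omega^{SO}_{4n}\to \R$, and then to show that the combined map $\Phi=\prod_\lambda\Phi_\lambda$ hits all of $\R^{p(n)}$. By Lemma \ref{lem:riemann-metric}, any compact random manifold $\mbf X$ admits a riemannian metric $g$ on the finitely generated projective $\mc A(\mbf X)$-module $\tau(\mbf X)$, and one can form the associated Levi-Civita connection leafwise. Its curvature $R$ is a leafwise $\mf{so}$-valued 2-form, and Chern-Weil theory applied to the $\lambda$-Pontryagin polynomial produces a closed leafwise $4n$-form $P_\lambda(R)$ whose leafwise de Rham class is independent of $g$. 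Pairing the leafwise volume with the transverse invariant measure $\mu$ gives a finite Borel measure on $\mbf X$, and I would define
$$
\Phi_\lambda(\mbf X)=\int_{\mbf X}P_\lambda(R).
$$
Finiteness follows from compactness and finite cost of a suitable atlas.

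Next I would establish cobordism invariance. Let $\mbf M$ be an oriented random $(4n+1)$-manifold with $\bds\partial \mbf M=\mbf X-\mbf Y$, equipped with a metric that restricts to a product on a collar of the boundary. Then $P_\lambda(R)$ is leafwise closed on $\mbf M$, so leafwise Stokes' theorem integrated against the transverse invariant measure on $\mbf M$ yields
$$
\Phi_\lambda(\mbf X)-\Phi_\lambda(\mbf Y)=\int_{\bds\partial\mbf M}P_\lambda(R)=\int_{\mbf M}d_L P_\lambda(R)=0,
$$
where $d_L$ denotes the leafwise exterior differential. Additivity under disjoint unions and compatibility with orientation reversal are immediate, so each $\Phi_\lambda$ is a well-defined homomorphism of abelian groups.

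For surjectivity I would exploit the fact that the transverse measure can take arbitrary positive real values. By Thom's classical theorem, one can choose closed oriented $4n$-manifolds $M_1,\ldots,M_{p(n)}$—for instance the products $\C P^{2i_1}\times\cdots\times \C P^{2i_k}$ indexed by the partitions of $n$—such that the $p(n)\times p(n)$ real matrix $A_{i,\lambda}=\langle P_\lambda(TM_i),[M_i]\rangle$ is invertible. Viewing each $M_i$ as a trivial random manifold $M_i\otimes\mbf K_{c_i}$, where $\mbf K_{c_i}$ is a single point of transverse mass $c_i\in \R_{>0}$, one finds $\Phi_\lambda(M_i\otimes \mbf K_{c_i})=c_i\,A_{i,\lambda}$. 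Taking disjoint unions of such random manifolds, with orientation flips to absorb signs, realizes every vector of $\R^{p(n)}$ and yields surjectivity.

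The hard part will be step one, namely setting up the leafwise Chern-Weil calculus on the projective module $\tau(\mbf X)$ in a way that ensures the pairing of $P_\lambda(R)$ with the transverse invariant measure is a well-defined real number, and then proving a version of Stokes' theorem that behaves correctly when integrated against the induced transverse measure on $\bds\partial\mbf M$. This is essentially the technical package developed by Connes for his foliation index theorem, and porting it to the framework of random manifolds with boundary—in particular verifying that the collar construction and the induced transverse measure on the boundary are compatible—is where the real work lies.
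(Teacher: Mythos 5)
Your proposal follows essentially the same route as the paper: the Pontryagin numbers are obtained by pairing Chern--Weil forms of the tangent module with the fundamental class built from the transverse invariant measure, cobordism invariance comes from Stokes' theorem together with the stable triviality of the tangent module restricted to the boundary (which the paper isolates as Lemma \ref{lem:pontryagin-natural}, where you instead invoke a product metric on a collar), and surjectivity is obtained exactly as in the paper by scaling the products $\prod_{k}\mbf P^{2k}(\C)$ by transversals of arbitrary mass and using the nonsingularity of the matrix of their Pontryagin numbers. The only cosmetic difference is that the paper sets up Chern--Weil theory algebraically on the projective $\mc A(\mbf X)$-module $\tau(\mbf X)$ rather than leafwise in the Connes foliation style.
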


The proof follows the scheme of the classical theorem of Thom-Pontryagin \cite{thom1954cobordism,milnorstasheff1974book} which states that a compact oriented smooth $4n$-manifold is the boundary of an compact oriented smooth $(4n+1)$-manifold if and only if its Pontryagin numbers vanish.

\subsection{Cohomology} Let $\mbf X$ be a random manifold with boundary. We define the \textit{cotangent space} of $\mbf X$ as the dual module 
$$
\Omega^{1}(\mbf X)=\op{Hom}_{\mc A(\mbf X)}(\tau(\mbf X),\mc A(\mbf X))
$$ 
and we define the exterior differential $d:\mc A(\mbf X)\to \Omega^1(\mbf X)$ by setting $da(\xi)=\xi(a)$ for every funtion $a\in \mc A(\mbf X)$ and tangent vector field $\xi\in \tau(\mbf X)$. The exterior derivative can be extended in the usual way to the whole exterior algebra 
$$
\Omega^{\bullet}(\mbf X)=\bigoplus_{i=0}^{\infty} \Omega^{i}(\mbf X)
$$ 
where $\Omega^i(\mbf X)=\bigwedge^i_{\mc A(\mbf X)}\Omega^1(\mbf X)$ is the $i$-th exterior power of $\Omega^1(\mbf X)$ over $\mc A(\mbf X)$, such that we have the usual identity $d^2=0$. The homology of the corresponding cochain complex is called the {\it de Rham cohomology} of $\mbf X$ and denoted by $H^{\bullet}(\mbf X)$. It has a natural real algebra structure inherited from the one in $\Omega^\bullet(\mbf X)$. In the other hand, we can define the $K$-theory of $\mbf X$, denoted by $K(\mbf X)$, as the algebraic $K$-theory of the algebra $\mc A(\mbf X)$. Recall that $K(\mbf X)$ is a ring structure induced by directed sums and tensor product of projectif modules. We will see that the $K$-theory and de Rham cohomology of bounded laminations can be seen as contravariant functors. For this, let $\mbf X$ and $\mbf Y$ be two random manifolds. A continuous mesurable map $f:\mbf X\to \mbf Y$ is said to be \textit{smooth} if for every $a\in\mc A(\mbf Y)$ the function $\mc A(f)(a)=a\circ f$ belongs to $\mc A(\mbf X)$. Indeed, Observe that the corresponding morphism of algebras $\mc A(f):\mc A(\mbf Y)\to \mc A(\mbf X)$, such that we can see $\mc A(\mbf X)$ as a $\mc A(\mbf Y)$-module. Observe that the map $E\mapsto E\otimes_{\mc A(\mbf Y)} \mc A(\mbf X)$ transforms finitely generated projectif $\mc A(\mbf Y)$-modules into finitely generated projectif $\mc A(\mbf X)$-modules and hence it induces a natural morphism of rings $K(f):K(\mbf Y)\to K(\mbf X)$. It remains the construction of a functorial homomorphism 
$$
\Omega^{\bullet}(f):\Omega^{\bullet}(\mbf Y)\to \Omega^{\bullet}(\mbf X)
$$
For this we need to prove the following result:
\begin{lemma}
	If $\mbf X$ is a bounded lamination, the $\mc A(\mbf X)$-module $\Omega^1(\mbf X)$ generated by the image of $d$. 
\end{lemma}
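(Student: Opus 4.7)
The plan is to reduce to the trivial case via a finite compact atlas and a subordinate partition of unity. On a trivial piece $V\otimes \mbf K$ with $V$ open in $\R^n$, Lemma \ref{lem:trivial-free} identifies $\tau$ with the free module $\mc A^{n}$, so its dual $\Omega^1$ is freely generated by $dx_1,\dots,dx_n$ and in particular by exact forms. What remains is to globalize this local statement, using bump functions to turn local coordinates into honest elements of $\mc A(\mbf X)$.

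Concretely, fix a finite compact smooth atlas $\{\varphi_i:U_i\to V_i\otimes \mbf K_i\}_{i=1}^k$ of $\mbf X$, a subordinate partition of unity $\sum_{i=1}^{k}a_i=1$, and auxiliary bump functions $h_i\in\mc A(\mbf X)$ with $\Supp(h_i)\subset U_i$ and $h_i\equiv 1$ on a neighborhood of $\Supp(a_i)$ (produced by the same recipe as in the proof of the preceding partition-of-unity lemma, refining the bump slightly). Writing $x_1,\dots,x_n$ for the Euclidean coordinates on $V_i$, introduce the globally defined functions $y_j^{(i)}=h_ix_j\in\mc A(\mbf X)$ (extended by $0$ outside $U_i$) and the globally defined vector fields $\xi_j^{(i)}\in\tau(\mbf X)$ equal to $h_i\partial_j$ on $U_i$ and to $0$ elsewhere. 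Given an arbitrary $\omega\in\Omega^1(\mbf X)$, set $b_{ij}=\omega(\xi_j^{(i)})\in\mc A(\mbf X)$. The identity to establish is
$$
\omega \;=\; \sum_{i=1}^{k}\sum_{j=1}^{n}(a_ib_{ij})\,d y_j^{(i)},
$$
which is a finite $\mc A(\mbf X)$-linear combination of exact forms and thus proves the lemma.

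To verify the identity I would test both sides against an arbitrary $\eta\in\tau(\mbf X)$. On the neighborhood of $\Supp(a_i)$ where $h_i\equiv 1$ one has $dh_i=0$, so there $y_j^{(i)}=x_j$ and $\xi_j^{(i)}=\partial_j$; combined with the local expansion $\eta|_{U_i}=\sum_j\eta(x_j)\partial_j$ from Lemma \ref{lem:trivial-free}, the $i$-th summand of the right-hand side, evaluated on $\eta$, reduces to $a_i\omega(\eta)$ on $\Supp(a_i)$ and vanishes outside because the factor $a_i$ does. Summing over $i$ and using $\sum_ia_i=1$ gives the claim.

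The only real subtlety is that $x_j$ and $\partial_j$ exist only over $U_i$: the bumps $h_i$ are introduced precisely to yield globally defined extensions, and the condition $h_i\equiv 1$ near $\Supp(a_i)$ is what eliminates the otherwise troublesome $dh_i$ contributions. A pleasant by-product of the argument is that $\Omega^1(\mbf X)$ is finitely generated as an $\mc A(\mbf X)$-module by the $kn$ exact forms $dy_j^{(i)}$, which will be useful in what follows.
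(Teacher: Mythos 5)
Your proof is correct and follows essentially the same route as the paper: reduce to a trivial chart via a compact atlas and a subordinate partition of unity, where Lemma \ref{lem:trivial-free} shows that $\Omega^1$ is freely generated by the differentials of the coordinate functions. You are in fact more careful than the paper, whose one-line reduction $\Omega^1(\mbf X)=\sum_i a_i\Omega^1(U_i)$ silently elides the point you treat explicitly with the bumps $h_i$ (namely that the local generators $dx_j$ must be promoted to differentials $d(h_ix_j)$ of globally defined elements of $\mc A(\mbf X)$ before one can speak of the image of the global $d$), and your identity $\omega=\sum_{i,j}(a_ib_{ij})\,dy_j^{(i)}$, checked against arbitrary $\eta\in\tau(\mbf X)$ using that $a_i$ kills everything outside the region where $h_i\equiv 1$, closes that gap.
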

\begin{proof}
	Since $\Omega^1(\mbf X)=\sum_i a_i\Omega^1(U_i)$ for any trivialisation $\{U_i\}$ and any subordinated partition of unity $\{a_i\}$, one can assume that  $\mbf X$ is a trivial random manifold. In that case, it follows from the proof of lemma \ref{lem:trivial-free} that $\Omega^1(\mbf X)$ is generated (as $\mc A(\mbf X)$-module) by the differentials $dx_i$ where $x_i:\mbf X\to \R$ are the coordinate fonctions.  
\end{proof}
In particular the vector space $\Omega^n(\mbf X)$ is generated by the elements of the form $a_0\,da_1,\dots da_n$ and the exterior differential is defined recursively by the formula
$$
d(a_0\,da_1\cdots da_n)=da_0\,da_1\cdots da_n
$$
Hence, $\Omega^\bullet(f)$ can be defined as the unique morphism of differential algebras verifying $\Omega^0(f)=\mc A(f)$. Such a construction is clearly functorial.

\subsection{Chern-Weil theory}
We sketch here the Chern-Weil theory in the context of random manifolds. This construction was developed in the late 1940s by Shiing-Shen Chern and Andr\'e Weil in the wake of proofs of the generalized Gauss-Bonnet theorem \cite{chern44gaussbonnet} . This theory was an important step in the theory of characteristic classes and it allowed to clarify the relationship between cobordism, K-theory and usual cohomology. We shall follow the general construction given in the context of differential graded algebras over general rings (see for example Karoubi \cite{karoubi87cyclic} or Loday \cite{loday98bookcyclichomology}).

Let $(\mc A^\bullet,d)$ be any commutative graded differential real algebra and let $E$ be a finitely generated projectif $\mc A^0$-module. A connection in $E$ is a degree $1$ linear endomorphism $\nabla$ of $\mc A^\bullet\otimes E$ such that $\SC{\nabla,a}=da$ for every $x\in \mc A^\bullet$, where $\SC{\cdot,\cdot}$ denotes the graded commutator. A straightforward computation shows that the degree $2$ operator $\nabla^2$ verifies $[\nabla^2,a]=0$ for every $a\in \mc A^2$, and it thus defines an element $R_\nabla\in \mc A^2\otimes\op{End}(E)$ called the curvature of $\nabla$. For every invariant polynomial $p:M_\infty(\R)\to \R$, i.e. any polynomial function verifying $p(AB)=p(BA)$ for every pair of real matrices $A$ and $B$, and for every finitely generated module $E$ over $\mc A^0$, we have a natural map $p:\mc A^\bullet\otimes \op{End}(E)\to \mc A^\bullet$. We can define
$$
p(E,\nabla)=p(R_\nabla)\in \mc A^{2\bullet}
$$
A standard homotopy argument shows that $p(E,\nabla)$ is closed and its cohomology class does not depend on $\nabla$. Hence it defines an element $p(E)\in H^{2\bullet}(A^{\bullet},d)$.

\begin{example}[The Chern character]
	Let $n\in \N$. The \textit{Chern $n$-character} of $E$ with respect to $(\mc A^\bullet,d)$ is defined as $ch_n(E)$ where
	$$
	ch_n(A)=\frac 1{n!}\op{tr}(A^n)
	$$
	It verifies the following properties:
\begin{enumerate}
  \item $ch_n(E\oplus E')=ch_n(E)+ch_n(E')$.
  \item $ch_n(E\otimes E')=ch_n(E)ch_n(E')$.
\end{enumerate}
and hence induces a homomorphism of rings
$$
ch_n:K(\mc A^0)\to H^{2\bullet}(\mc A^\bullet,d)
$$
\end{example}

\begin{example}[The total Pontryagin class]
		The total Pontryagin class of $E$ with respect to $(\mc A^\bullet,d)$ is defined as $\bds p(E)$ where
	$$
	\bds p(A)=\det(1+A)
	$$
	It verifies the following properties
\begin{enumerate}
  \item $\bds p(\mc A^0)=1$.
  \item $\bds p(E\oplus E')=\mbf p(E)\mbf p(E')$.
\end{enumerate}

\end{example}

\subsection{The fundamental class of a random manifold} Let $V\subset \R^n$ be an open set and $\mbf K$ a standard probability space. Since the tangent space $\tau(V\otimes \mbf K)$ is a rank $n$ free $\mc A(V\otimes \mbf K)$-module, so is $\Omega^1(V\otimes \mbf K)$, and we thus have a natural identification of graded algebras
$$
\Omega^\bullet(V\otimes\mbf K)\simeq C^{\infty}(V,L^{\infty}(\mbf K))\otimes \wedge^\bullet \R^n
$$
By the dominated convergence theorem, integration through $\mbf K$ gives rise to a linear map $\int_{\mbf K}:C^{\infty}(V,L^{\infty}(\mbf K))\to C^{\infty}(V)$ which extends to a linear morphism of graded differential algebras $\int_{\mbf K}:\Omega^\bullet(V\otimes \mbf K)\to \Omega^\bullet(V)$ 
where $\Omega^\bullet(V)$ is the algebra of usual differential forms on $V$. Let $\mbf X$ be an oriented random $n$-manifold together with an oriented compact atlas $\{\varphi_i:U_i\to V_i\times \mbf K_i\}$ and a subordinated partition of unity $a_i\in \mc A(U_i)$. Fot every $\omega\in \Omega^{n}(\mbf X)$ and every $a_i$ we can see $a_i\omega$ as an element of $\Omega^{n}(V_i\otimes \mbf K_i)$ and $\int_{\mbf K_i}a_i\omega$ as a smooth $n$-form with compact support on $V_i$. We shall define
$$
\int_{\mbf X} \omega =\sum_i\int_{V_i}\int_{\mbf K_i} a_i\omega 
$$
A straightforward computation shows that this definition does not depend on the partition of unity nor the atlas. For the sake of simplicity, for every $\omega\in \Omega^{\bullet}(\mbf X)$ we shall set $\int_{\mbf X}\omega=\int_{\mbf X}\omega_n$, where $\omega_n$ denotes the component of $\omega$ in $\Omega^{n}(\mbf X)$. This defines a linear map
$$
\int_{\mbf X}:\Omega^{\bullet}(\mbf X)\to \R
$$
called the \textit{fundamental cycle} of $\mbf X$. The following result is a direct consequence of the definition of the above integral and the well known Stokes theorem on $\R^n$:

\begin{proposition}\label{prop:stokes}
	Let $\mbf X$ an oriented random $n$-manifold with boundary and let $\partial\mbf X$ its boundary. Then for every $\omega\in \Omega^{\bullet}(\mbf X)$ we have
	$$
	\int_{\mbf X}d\omega=\int_{\partial\mbf X}\Omega^{\bullet}(i)(\omega)
	$$
	where $i:\partial\mbf X\to\mbf X$ is the inclusion map.
In particular, since $\partial\mbf X$ has no boundary, the fundamental cycle of $\partial\mbf X$ induces a linear map $\int_{\partial\mbf X}:H^{\bullet}(\partial\mbf X)\to \R$ called the \emph{fundamental class} of $\partial\mbf X$. For every $\alpha\in H^\bullet(\mbf X)$ one has
	$$
	\int_{\partial\mbf X}H^\bullet(i)(\alpha)=0
	$$
\end{proposition}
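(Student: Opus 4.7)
The plan is to reduce the identity to the classical Stokes theorem, applied fiberwise in each chart of an oriented compact atlas. Since $\int_{\mbf X}$ only picks up the top-degree component, I would reduce to the case $\omega\in\Omega^{n-1}(\mbf X)$ from the outset. Then I would fix an oriented compact smooth atlas $\{\varphi_{i}:U_{i}\to V_{i}\otimes \mbf K_{i}\}_{i\in I}$ with each $V_{i}$ open in $\R^{n}$ or $\mathbb H^{n}$, together with the subordinated partition of unity $\{a_{i}\}$ built from bump functions $g_{i}$ on $V_{i}$ as in the partition of unity lemma. The crucial feature is that $a_{i}\omega$, read in the chart, has support uniformly compact in the $V_{i}$-direction (namely inside $\op{supp}(g_{i})\times \mbf K_{i}$), which will legitimize the application of classical Stokes on each $V_{i}$.

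The next step is the Leibniz identity: from $\sum_{i}a_{i}=1$ one gets
$$
\sum_{i} d(a_{i}\omega)=\Bigl(d\sum_{i}a_{i}\Bigr)\wedge\omega+\sum_{i}a_{i}\,d\omega=\sum_{i}a_{i}\,d\omega,
$$
so the definition of the fundamental cycle gives $\int_{\mbf X}d\omega=\sum_{i}\int_{V_{i}}\int_{\mbf K_{i}}d(a_{i}\omega)$. Under the chart identification $\Omega^{\bullet}(V_{i}\otimes\mbf K_{i})\iso C^{\infty}(V_{i},L^{\infty}(\mbf K_{i}))\otimes\wedge^{\bullet}\R^{n}$, the exterior differential only involves derivatives in the $V_{i}$-direction, so Fubini and dominated convergence let me commute $d$ with $\int_{\mbf K_{i}}$. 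The resulting form $\int_{\mbf K_{i}}a_{i}\omega\in\Omega^{n-1}(V_{i})$ is compactly supported and smooth, and classical Stokes on the open set $V_{i}\subset\R^{n}$ or $\mathbb H^{n}$ yields a boundary integral that vanishes unless $V_{i}\subset\mathbb H^{n}$.

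For the boundary charts, their restrictions to $\partial\mbf X\cap U_{i}$ form an oriented compact atlas of $\partial\mbf X$ with subordinated partition of unity $\{a_{i}|_{\partial\mbf X}\}$, and the restriction of $a_{i}\omega$ to $\partial V_{i}\otimes\mbf K_{i}$ is exactly the chart expression of $(a_{i}|_{\partial\mbf X})\cdot\Omega^{\bullet}(i)(\omega)$. Summing and unfolding the definition of $\int_{\partial\mbf X}$ yields the Stokes identity. The cohomological statement then follows: applying the main identity to $\partial\mbf X$ itself (which has empty boundary) shows $\int_{\partial\mbf X}d\eta=0$ for every $\eta\in\Omega^{\bullet}(\partial\mbf X)$, so $\int_{\partial\mbf X}$ factors through $H^{\bullet}(\partial\mbf X)$; then for $\alpha=[\omega]$ with $\omega$ closed, $\int_{\partial\mbf X}H^{\bullet}(i)(\alpha)=\int_{\mbf X}d\omega=0$.

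The point requiring most care is the fiberwise Stokes step: one must verify that the identification of $\Omega^{\bullet}(V_{i}\otimes\mbf K_{i})$ with $L^{\infty}(\mbf K_{i})$-valued smooth forms permits commuting $d$ with the fiber integral (which rests on the smoothness of $a_{i}\omega$ in the $V_{i}$-direction with values in $L^{\infty}(\mbf K_{i})$), and that the boundary orientation on $\partial V_{i}\subset\partial\mathbb H^{n}$ induced by the ambient orientation matches the natural orientation on $\partial\mbf X$ coming from an outward normal. Both points are formal but essential for the signs in Stokes to line up correctly.
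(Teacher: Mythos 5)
Your proof is correct and follows exactly the route the paper intends: the paper gives no written proof, stating only that the proposition is "a direct consequence of the definition of the above integral and the well known Stokes theorem on $\R^n$", and your argument (reduce to degree $n-1$, use the Leibniz identity $\sum_i d(a_i\omega)=\sum_i a_i\,d\omega$, commute $d$ with the fibre integral $\int_{\mbf K_i}$ via the identification $\Omega^{\bullet}(V\otimes\mbf K)\iso C^{\infty}(V,L^{\infty}(\mbf K))\otimes\wedge^{\bullet}\R^{n}$, and apply classical Stokes chart by chart) is precisely the expansion of that remark. The points you flag as needing care — commuting $d$ with $\int_{\mbf K_i}$ (already built into the paper's claim that $\int_{\mbf K}$ is a morphism of differential graded algebras) and the compatibility of boundary orientations — are the right ones.
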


\subsection{Pontryagin numbers}
The aim of this paragraph is to give a proof of theorem \ref{thm:pontryagin}. As in the classical case of compact manifolds, the proof relies in the cobordism invariance of characteristic numbers, which are obtained as a pairing between the fundamental class of the lamination and some suitable cohomology classes. Since the tangent space of a random manifold $\mbf X$ is a projectif $\mc A(\mbf X)$-module, we can define the \textit{total Pontryagin class} of $\mbf X$, noted $\bds p(\mbf X)$, as the total Pontryagin class of $\tau(\mbf X)$ with respect to the differential algebra $(\Omega^{\bullet}(\mbf X),d)$. The following lemma shows the naturality of the total Pontryagin class with respect to bordisms:

\begin{lemma}\label{lem:pontryagin-natural}
	If $\mbf X$ is a random $n$-manifold with boundary and $i:\partial\mbf X\to\mbf X$ is the inclusion of its boundary, then
	$$
	\bds p(\partial\mbf X) = H^\bullet(i)(\bds p(\mbf X))
	$$
\end{lemma}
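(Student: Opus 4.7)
The plan is to reduce the statement to two standard ingredients of Chern--Weil theory: naturality under pullback of connections, and multiplicativity of the total Pontryagin class applied to a splitting of the tangent module along the boundary.

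First, I would establish that along the inclusion $i:\partial\mbf X\to \mbf X$ the pulled-back tangent module decomposes, as an $\mc A(\partial\mbf X)$-module, into $\tau(\partial\mbf X)$ and a trivial line:
$$
\tau(\mbf X)\otimes_{\mc A(\mbf X)}\mc A(\partial\mbf X)\;\cong\;\tau(\partial\mbf X)\oplus\mc A(\partial\mbf X).
$$
To construct this splitting I would pick a riemannian metric $g$ on $\mbf X$, which is granted by Lemma \ref{lem:riemann-metric}, together with a compact atlas $\{\varphi_i:U_i\to V_i\otimes\mbf K_i\}$ whose charts near the boundary have $V_i$ open in the half-space $\mathbb H^n$. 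Over such a chart the $n$-th coordinate vector field is inward-pointing; $g$-orthonormalising it against the tangential chart fields and gluing via a subordinated partition of unity produces a globally defined unit inward normal $\nu$. The $g$-orthogonal complement of $\mc A(\partial\mbf X)\cdot \nu$ is naturally identified with $\tau(\partial\mbf X)$, realised as the submodule of derivations of $\mc A(\mbf X)\otimes_{\mc A(\mbf X)}\mc A(\partial\mbf X)$ that annihilate the normal direction.

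Second, I would invoke the naturality of Chern--Weil. A connection $\nabla$ on $\tau(\mbf X)$ induces a connection on $\tau(\mbf X)\otimes_{\mc A(\mbf X)}\mc A(\partial\mbf X)$ whose curvature is the image of $R_\nabla$ under $\Omega^2(i)\otimes\op{id}$. Since $\bds p$ is an invariant polynomial in the curvature and $\Omega^\bullet(i)$ is a morphism of differential graded algebras, one obtains
$$
H^\bullet(i)(\bds p(\mbf X)) \;=\; \bds p\bigl(\tau(\mbf X)\otimes_{\mc A(\mbf X)}\mc A(\partial\mbf X)\bigr).
$$
Combined with the multiplicativity $\bds p(E\oplus E')=\bds p(E)\bds p(E')$ recorded in the paper and the normalisation $\bds p(\mc A(\partial\mbf X))=1$ for the trivial rank-$1$ module, the splitting of the first step yields
$$
\bds p\bigl(\tau(\mbf X)\otimes_{\mc A(\mbf X)}\mc A(\partial\mbf X)\bigr)\;=\;\bds p(\tau(\partial\mbf X))\cdot 1\;=\;\bds p(\partial\mbf X),
$$
which together with the previous identity gives the claim.

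The main obstacle is the first step: one must check carefully that the inward normal $\nu$ and the embedding $\tau(\partial\mbf X)\hookrightarrow \tau(\mbf X)\otimes_{\mc A(\mbf X)}\mc A(\partial\mbf X)$ are well defined and patch compatibly across charts in the laminated setting of a random manifold. The Gram--Schmidt and partition-of-unity arguments used in Lemma \ref{lem:riemann-metric} and in the finite-generation theorem for $\tau(\mbf X)$ transfer essentially verbatim, but the bookkeeping across chart overlaps and along the transverse directions $\mbf K_i$ requires care; everything else in the proof is formal consequence of the Chern--Weil formalism already set up.
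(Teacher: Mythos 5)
Your proposal follows essentially the same route as the paper's proof: both reduce the claim to the splitting $\tau(\mbf X)\otimes_{\mc A(\mbf X)}\mc A(\partial\mbf X)\simeq\tau(\partial\mbf X)\oplus\mc A(\partial\mbf X)$, constructed from a riemannian metric, boundary charts, and a partition of unity producing a non-vanishing normal field, and then conclude by multiplicativity of $\bds p$ and triviality on the rank-one summand. Your explicit treatment of the Chern--Weil naturality step (pulling back the connection and its curvature along $i$) is a point the paper leaves implicit, but it is the same argument.
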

\begin{proof}
	Since the total Pontryagin class is additive with respect to direct sums and $\bds p(\mc A(\mbf X))=1$, it suffices to prove that there exists an isomorphism of $\mc A(\mbf X)$-modules
	$$
	\tau(\mbf X)\otimes \mc A(\partial\mbf X)\simeq \tau(\partial\mbf X)\oplus \mc A(\partial\mbf X)
	$$
Let $g$ be a Riemannian metric on $\mbf X$ and let $\{\varphi:U_i\to V_i\otimes \mbf K_i\}_{i\in I}$ an oriented atlas together with a subordinated partition of unity $\alpha_i$. For every $i\in I$ we shall note $g_i$ the induced riemannian metric on $U_i$. 	By the lemma \ref{lem:trivial-free} we have identifications $\tau(U_i)=\mc A(U_i)^n$ and $\tau(\partial U_i)=\mc A(\partial U_i)^{n-1}$, which give rise to natural isomorphisms of $\mc A(\mbf X)$-modules
	$$
	\tau(U_i)\otimes \mc A(\partial U_i)\simeq \tau(\partial U_i)\oplus \mc A(\partial U_i)	
	$$
	The induced inclusions $\tau(\partial u_i)\subset \tau(U_i)\otimes \mc A(\partial U_i)$ add up into an inclusion
	$$
	\tau(\partial \mbf X)\subset \tau(\mbf X)\otimes \mc A(\partial \mbf X)
	$$
Let $\hat g_i$ be the inner product on $\tau(U_i)\otimes \mc A(\partial U_i)$ given by 
	$$
	\hat g_i(\xi\otimes a\otimes \nu\otimes b)=g_i(\xi a\otimes \nu b).
	$$
 and let $\tau(\partial U_i)^\perp$ be the orthogonal complement of $\tau(\partial U_i)$ with respect to $\hat g_i$, which is obviously isomorphic to $\mc A(\partial U_i)$. Let $\xi_i$ be a $\hat g$-normal generator of $\tau(\partial U_i)^\perp$ as $\mc A(\partial\mbf X)$-module. Since the atlas is oriented, one can construct mesurable maps $\epsilon:\mbf K_i\to \{\pm 1\}$ such that the vector $\xi=\sum_i\alpha_i \epsilon_i\xi_i\in \tau(\mbf X)\otimes \mc A(\partial\mbf X)$ does not vanish. A direct computation shows that  
	$$
	\mc A(\partial \mbf X)\xi = \tau(\partial \mbf X)^{\perp}
	$$
	which finishes the proof. 
\end{proof}

Let $\mbf Y$ be an oriented random $4n$-manifold without boundary. Let us note $\bds p_{k}(\mbf Y)\in H^{4k}(\mbf Y)$ the component of degree $4k$ of the total Pontryagin class of $\mbf Y$. It follows from the lemma above that $\bds p_{k}(\mbf Y)=H^\bullet(i)(\bds p_{k}(\mbf Y))$ for every $k$. A partition of $n$ is a set $\alpha\subset \N^{*}$ of strictly positive integers such that $\sum_{k\in \alpha} k=n$. We shall note $\mc P(n)$ the set of partitions of $n$. For every $\alpha\in \mc P(n)$ we shall note  
$$
\bds p_\alpha(\mbf Y)=\prod_{k\in\alpha}\bds p_{k}(\mbf Y)\in H^{2n}(\mbf Y)
$$
The real numbers $p_\alpha(Y):=\int_{\mbf Y}\bds p_\alpha(\mbf Y)$ are called the {\em Pontryagin numbers} of $\mbf Y$. Let us assume that $\mbf Y$ is the boundary of an oriented random manifold $\mbf X$ and let $i:\mbf Y\to \mbf X$ be the inclusion. Since $H^\bullet(i)$ is a morphism of graded rings, it follows from lemma \ref{lem:pontryagin-natural} that $\bds p_{\alpha}(\mbf Y)=H^{\bullet}(i)(\bds p_\alpha(\mbf X))$,   The proposition \ref{prop:stokes} implies that all the Pontryagin numbers of $\mbf Y$ vanish. Moreover, one easily sees that $p_{\alpha}(-\mbf Y)=-p_\alpha(\mbf Y)$ and $p_{\alpha}(\mbf Y+\mbf Y')=p_\alpha(\mbf Y)+p_\alpha(\mbf Y')$. Hence, it follows that the map 
$\mbf Y\mapsto p_{\bullet}(\mbf Y)$ induces a morphism of abelian groups 
$$
\Phi:\mbf R\bds \Omega^{SO}_{4n}\to \R^{\mc P(n)}
$$
To complete the proof of the theorem \ref{thm:pontryagin}, it remains to show that $\Phi$ is surjective. Let us note $M_\alpha=\prod_{k\in \alpha}\mbf P^{2k}(\C)$ for $\alpha\in \mc P(n)$. It is a well known fact (see for instance \cite{milnorstasheff1974book}) that for every $\alpha$ the matrix $A=(p_{\beta}(M_\alpha))_{\alpha,\beta}$ is non singular. In particular, for every $v\in \R^{\mc P(n)}$ there exists $\lambda\in \R^{\mc P(n)}$ such that
$$
v=\sum_\alpha p_{\bullet}(M_\alpha\otimes \mbf K_{\lambda_{\alpha}})=\Phi\left[\sum_\alpha M_\alpha\otimes \mbf K_{\lambda_{\alpha}}\right]
$$
where $\mbf K_\lambda$ is the standard diffuse measure space of mass $\lambda$. This completes the proof. 

\bibliography{../mybib}

\begin{thebibliography}{GW16}

\bibitem[Ber06]{bermudez2006char}
Miguel Berm\'udez.
\newblock Sur la caract\'eristique d'{E}uler des feuilletages mesur\'es.
\newblock {\em J. Funct. Anal.}, 237(1):150--175, 2006.
\newblock URL: \url{https://doi.org/10.1016/j.jfa.2005.12.026}.

\bibitem[Che44]{chern44gaussbonnet}
Shiing-shen Chern.
\newblock A simple intrinsic proof of the {G}auss-{B}onnet formula for closed
  {R}iemannian manifolds.
\newblock {\em Ann. of Math. (2)}, 45:747--752, 1944.
\newblock URL: \url{https://doi.org/10.2307/1969302}.

\bibitem[Con82]{connes80surveyfoliations}
A.~Connes.
\newblock A survey of foliations and operator algebras.
\newblock In {\em Operator algebras and applications, {P}art {I} ({K}ingston,
  {O}nt., 1980)}, volume~38 of {\em Proc. Sympos. Pure Math.}, pages 521--628.
  Amer. Math. Soc., Providence, R.I., 1982.

\bibitem[Con90]{connes90book}
Alain Connes.
\newblock {\em G\'eom\'etrie non commutative}.
\newblock InterEditions, Paris, 1990.

\bibitem[Fat78]{fathi78automorphimssimple}
Albert Fathi.
\newblock Le groupe des transformations de {$[0, 1]$} qui pr\'eservent la
  mesure de {L}ebesgue est un groupe simple.
\newblock {\em Israel J. Math.}, 29(2-3):302--308, 1978.
\newblock URL: \url{https://doi.org/10.1007/BF02762017}.

\bibitem[GW16]{gayetwelschinger2016betti}
Damien Gayet and Jean-Yves Welschinger.
\newblock Betti numbers of random real hypersurfaces and determinants of random
  symmetric matrices.
\newblock {\em J. Eur. Math. Soc. (JEMS)}, 18(4):733--772, 2016.
\newblock URL: \url{https://doi.org/10.4171/JEMS/601}.

\bibitem[Kar87]{karoubi87cyclic}
Max Karoubi.
\newblock Homologie cyclique et {$K$}-th\'eorie.
\newblock {\em Ast\'erisque}, (149):147, 1987.

\bibitem[Lod98]{loday98bookcyclichomology}
Jean-Louis Loday.
\newblock {\em Cyclic homology}, volume 301 of {\em Grundlehren der
  Mathematischen Wissenschaften [Fundamental Principles of Mathematical
  Sciences]}.
\newblock Springer-Verlag, Berlin, second edition, 1998.
\newblock Appendix E by Mar\'\i a O. Ronco, Chapter 13 by the author in
  collaboration with Teimuraz Pirashvili.
\newblock URL: \url{https://doi.org/10.1007/978-3-662-11389-9}.

\bibitem[MS74]{milnorstasheff1974book}
John~W. Milnor and James~D. Stasheff.
\newblock {\em Characteristic classes}.
\newblock Princeton University Press, Princeton, N. J.; University of Tokyo
  Press, Tokyo, 1974.
\newblock Annals of Mathematics Studies, No. 76.

\bibitem[Pol98]{polchinski1998string}
J.~Polchinski.
\newblock {\em String Theory: Volume 1, An Introduction to the Bosonic String}.
\newblock Cambridge Monographs on Mathematical Physics. Cambridge University
  Press, 1998.
\newblock URL: \url{https://books.google.fr/books?id=jbM3t\_usmX0C}.

\bibitem[Tho54]{thom1954cobordism}
Ren\'e Thom.
\newblock Quelques propri\'et\'es globales des vari\'et\'es diff\'erentiables.
\newblock {\em Comment. Math. Helv.}, 28:17--86, 1954.
\newblock URL: \url{https://doi.org/10.1007/BF02566923}.

\end{thebibliography}
\bibliographystyle{alphaurl}

\end{document}